\documentclass[journal]{IEEEtran}

\usepackage{cite}

\usepackage{graphicx}
\usepackage{pgf}
	
\usepackage[cmex10]{amsmath}
\usepackage{amsthm}
\usepackage{amsmath}
\usepackage{amssymb}
\usepackage{enumitem}
\usepackage{balance}
\usepackage{array}
\usepackage{bm}
\usepackage{color}

\newtheorem{thm}{Theorem}[section]

\newtheorem{lemma}[thm]{Lemma}

\theoremstyle{definition}
\newtheorem{defin}[thm]{Definition}

\theoremstyle{remark}
\newtheorem{remark}[thm]{Remark}

\theoremstyle{remark}

\def\R {{\Bbb R}}


\newcommand{\hsk}{\hskip 0.3cm}


\newcommand{\nl}{\newline}
\newcommand{\ben}{\begin{enumerate}}

\newcommand{\een}{\end{enumerate}}
\newcommand{\bit}{\begin{itemize}}

\newcommand{\eit}{\end{itemize}}

\def\eps{\varepsilon}

\def\dispsum{\displaystyle \sum}
\def\meansum{\frac{1}{N}\dispsum_{n=1}^{N}}

\def\QED{\nobreak\quad\ifmmode\roman{Q.E.D.}\else{\rm Q.E.D.}\fi}

\newcommand{\abs}[1]{\left\lvert#1\right\rvert}
\def\norm#1{\left\Vert#1\right\Vert}
\def\BracketRef#1{(\ref{#1})}
\def\mbf#1{\bm{#1}}


\begin{document}

\title{Finite sample performance of linear least squares estimation}


\author{Michael~Krikheli$^{1,2}$\thanks{$^{1}$The authors are with the Faculty of Engineering, Bar-Ilan University, 52900, Ramat-Gan. Corresponding author email: michael.krih@gmail.com.} and~Amir~Leshem$^{1,2}$
\thanks{$^2$ This work was supported by ISF grant 1644/18. Part of this work has been presented in ~\cite{krikheli2016finite} and \cite{krikheli2017finite}. }
}

\maketitle
\begin{abstract}
Linear Least Squares is a very well known technique for parameter estimation, which is used even when sub-optimal, because of its very low computational requirements and the fact that exact knowledge of the noise statistics is not required. Surprisingly, bounding the probability of large errors with finitely many samples has been left open, especially when dealing with correlated noise with unknown covariance. In this paper we analyze the finite sample performance of the linear least squares estimator. Using these bounds we obtain accurate bounds on the tail of the estimator's distribution. We show the fast exponential convergence of the number of samples required to ensure a given accuracy with high probability. We analyze a sub-Gaussian setting with a fixed or random design matrix of the linear least squares problem. We also extend the results to the case of a martingale difference noise sequence. Our analysis method is simple and uses simple $L_{\infty}$ type bounds on the estimation error. We also provide probabilistic finite sample bounds on the estimation error $L_2$ norm. The tightness of the bounds is tested through simulation. We demonstrate that our results are tighter than previously proposed bounds for $L_{\infty}$ norm of the error. The proposed bounds make it possible to predict the number of samples required for least squares estimation even when the least squares is sub-optimal and is used for computational simplicity.
\end{abstract}
\begin{IEEEkeywords}
Estimation; linear least squares; non Gaussian; concentration bounds; finite sample; large deviations; confidence bounds; martingale difference sequence
\end{IEEEkeywords}
\section{Introduction}
\subsection{Related Work}
Linear least squares estimation has numerous applications in many fields. For instance, it was used in soft-decision image interpolation applications in ~\cite{zhang2008image}. The interpolation for each image block was based on linear least squares estimation. An extension to the method was proposed in ~\cite{hung2012robust} where weighted linear least squares was used instead of regular least squares. The weights were distributed according to the geometric properties of the pixels of interest and the residuals. Another field that uses linear least squares is source localization using signal strength, as in ~\cite{so2011linear}. In that paper, weighted linear least squares was used to find the distance of the received signals given the strength of the signals received in the sensors and the sensors' locations. An improved algorithm was proposed using constraints on the two estimated variables. Weighted least squares estimators were also used in the field of diffusion MRI parameters estimation ~\cite{veraart2013weighted}. It was shown that the weighted linear least squares approach has significant advantages because of its simplicity and good results. A standard analysis of estimation problems calculates the Cramer-Rao bound (CRB) and implements the asymptotic normality of the estimator. This type of analysis is asymptotic by nature. For some applications, see for instance ~\cite{leshem1999direction} the direction of arrival problems were analyzed in terms of the CRB. In ~\cite{stoica1989music} the ML estimator and MUSIC algorithms were studied and the CRB was calculated.\nl
The noise model differs in many applications of least squares and other optimization methods. Rather than the Gaussian model, a Gaussian mixture is used in many applications. For instance, in ~\cite{djuric2002sequential} a Gaussian mixture model of time-varying autoregressive process was assumed and analyzed. The Gaussian mixture model was used to model noise in underwater communication systems in ~\cite{banerjee2014performance}. Wiener filters in Guassian mixture signal estimation were analyzed in ~\cite{tan2014wiener}. In ~\cite{bhatia2007non} a likelihood based algorithm for Gaussian mixture noise was devised and analyzed in the terms of the CRLB. In ~\cite{wang1999robust} a robust detection technique using Maximum-Likelihood estimation was proposed for an impulsive noise modelled as a Gaussian mixture.\nl
In this work we consider sub-Gaussian noise, which is quite a general and an interesting noise framework. The Gaussian mixture model for instance is sub-Gaussian and our results are valid for this model. In the case of Gaussian noise, least squares coincides with the maximum likelihood estimator. Still in  many cases of interest least squares estimation is used in non-Gaussian noise as well as for computational simplicity. Specifically the sub-Gaussian noise model is of special interest in many applications, for example ~\cite{abbasi2011improved, jamieson2014lil, ai2014one, reynolds1995robust}.\nl
The least squares problem has been extensively explored for many years both for linear and non-linear regression. The strong consistency of the linear least squares was proved in ~\cite{lai1985strong}. Asymptotic bounds for fixed size confidence bounds were stated for example in ~\cite{gleser1965asymptotic}. Non-linear least squares problems have been thoroughly studied as well. The result in ~\cite{rao1984exponential} shows the exponential rate of convergence for the Gaussian and sub-Gaussian case of non-linear least squares problems when the parameter space is $\R$. The authors derived the exponential rate of convergence without calculating the specific number of samples needed. In ~\cite{rao1984rate} the authors showed the rate of convergence for non-linear least squares estimates with dependent errors. Asymptotic performance for the non-linear least squares was analyzed in ~\cite{wu1981asymptotic}. There, the necessary and sufficient conditions for strong and weak consistency were given and the asymptotic normality was shown. Large deviation results for non-linear least squares estimations were given in ~\cite{sieders1987large} and ~\cite{shuhe1993large}. Ridge regularized least squares models were studied in ~\cite{caponnetto2007optimal} by means of the optimal convergence rate. \nl
One approach to performance analysis of the least squares estimator is the use of asymptotic normality. However, this is insufficient for large deviations since by the  Berry-Esseen theorem ~\cite{esseen1942liapounoff, berry1941accuracy,  shevtsova2010improvement} the normal approximation error is $O\left(\frac{1}{\sqrt{N}}\right)$. Other approaches were suggested in a Bayesian setting in ~\cite{routtenberg2012general}.\nl

In the past few years, the finite sample behavior of least squares problems has been studied in ~\cite{oliveira2016lower,hsu2014random,audibert2010robust,audibert2011robust, tsakonas2014convergence}. Some of these results also analyze ridge regularized least squares models. The sample complexity of linear regressors was analyzed in ~\cite{shamir2015sample}. In recent years there have been significant advances in understanding the statistical performance of regularized LS and other sparsity regularized techniques \cite{bickel2009simultaneous, van2014asymptotically, nickl2013confidence}. Non asymptotic performance of generalized linear model is given in ~\cite{van2007non}.\nl
In many cases of interest the noise model used is not i.i.d but instead a martingale difference sequence model. This noise model is quite general and is used in various fields. For example the first order ARCH models introduced in ~\cite{engle1982autoregressive} are popular in economic theory. Moreover, ~\cite{lai1982least} analyzed similar least squares models with applications in control theory. An important case of a martingale difference noise which is highly relevant in signal processing and communication applications is a bounded or Gaussian signal passing through an FIR channel. Non i.i.d noise models appear naturally in many signal processing applications, for example ~\cite{li2000airborne, noam2006asymptotic, gobet1981spectral}. The asymptotic properties of these models have been analyzed in various papers; for example ~\cite{lai1983asymptotic, nelson1980note, christopeit1980strong}. The results show the strong consistency of the least squares estimator under martingale difference noise and for autoregressive models. The least squares efficiency in an autoregressive noise model was studied in ~\cite{kramer1980finite}. However, finite sample results were not presented. \nl
\subsection{Contribution}
In this paper we provide a finite sample analysis of linear least squares problems.
The results significantly extend the results of ~\cite{oliveira2016lower,hsu2014random,audibert2010robust,audibert2011robust} in several ways. First we provide $L_{\infty}$ non-parametric bounds on the distribution of the error in linear least squares (i.e. we only assume bounds on the moment generating function of the noise rather than knowing the noise distribution). We then provide new bounds on the number of samples required for a given performance level and a given outage probability. Then we extend the results for the very general model of sub-Gaussian martingale difference sequence (MDS) noise. For the bounded MDS noise case we provide tighter results. This allows us to compute the performance of linear least squares under very general conditions. Since the linear least squares solution is computationally simple it is used in practice even when it is sub-optimal. The analysis of this paper allows the designer to understand the loss due to the computational complexity reduction without the need for massive simulations. The fact that we only need knowledge of a sub-Gaussianity parameter of the noise allows us to use these bounds when the noise distribution
is unknown. Our previous conference paper ~\cite{krikheli2016finite} was limited to the $L_{\infty}$ case and i.i.d noise. As explained above, the current results are much more general.
\section{problem formulation}\label{sec:problem_formulation}
Consider a linear model with additive noise
\begin{equation}\label{eq:x_definition}
\mbf{x} = \mbf{A}\mbf{\theta}_0 + \mbf{v}
\end{equation}
where $\mbf{x} \in \R^{N\times 1}$ is our output, $\mbf{A} \in \R^{N\times p}$ is a known matrix with bounded random elements, $\mbf{\theta}_0 \in \R^p$ is the estimated parameter and $\mbf{v} \in \R^{N\times 1}$ is a noise vector with independent and sub-Gaussian elements\footnote{For simplicity we only consider the real case. The complex case is similar with minor modifications.}. $N$ indicates the number of samples used in the model.\nl
Many real world noise models are sub-Gaussian; for instance, Gaussian, finite Gaussian mixture, all kinds of bounded variables, and any combination of the above. Many real-world models assume this kind of noise model.\nl
The linear least squares cost function is defined as:
\begin{equation}
J^N_{0}\left(\mbf{\theta},\mbf{x}\right) = \left(\mbf{x} - \mbf{A}\mbf{\theta}\right)^T\left(\mbf{x} - \mbf{A}\mbf{\theta}\right).
\end{equation}

Given $N$ samples the least squares solution is given by
\begin{equation}\label{def:theta_N}
\hat{\mbf{\theta}}^N_{0} = \left(\mbf{A}^T\mbf{A}\right)^{-1}\mbf{A}^T\mbf{x} = \left(\meansum \mbf{a}_n\mbf{a}_n^T\right)^{-1} \meansum \mbf{a}_n^Tx_n
\end{equation}
where $\mbf{a}_n^T, \hsk n=1 \dots N$ are the rows of $\mbf{A}$ and $x_n, \hsk n=\hsk 1 \dots N$ are the data samples. $\hat{\mbf{\theta}}^N_{0}$ is the value that minimizes the cost function $J^N_{0}$.
If the expected value of the noise is $0$ then the estimator is unbiased, i.e. the expected value of the estimator is the true parameter.\nl
We want to study the tail distribution of $\norm{\hat{\mbf{\theta}}^N_{0} - \mbf{\theta}_0}_{\infty}$ or more specifically to obtain bounds of the form
\begin{equation}\label{eq:wanted_inequality}
P\left(\norm{\hat{\mbf{\theta}}^N_{0} - \mbf{\theta}_0}_\infty > r\right) < \eps
\end{equation}
as a function of $N$. Furthermore, given $r$, $\eps$ we want to calculate the number of samples needed $N\left(r,\eps\right)$ to achieve the above inequality.
We analyze the case where $\mbf{A}$ is random with bounded elements. We also state theorems for the simpler case when $\mbf{A}$ is fixed. \nl
Throughout this paper we use the following mathematical notations:
\begin{defin} \ \\\label{definitions}
\begin{enumerate}
\item{Let $x$ be a random variable defined on the probability space $\left(\Omega,F, P\right)$ and denote $E\left(x\right)$ the expectation of $x$.}
\item{The logarithmic moment generating function of a random variable $x$ is: \nl
$\Lambda_{x}\left(s\right) := \log E[e^{sx}]$.}
\item{Let $\mbf{B} \in \R^{p\times p}$ be a square matrix; we define the operators $\lambda_{max}\left(\mbf{B}\right)$ and $\lambda_{min}\left(\mbf{B}\right)$ to give the maximal and minimal eigenvalues of $\mbf{B}$ respectively.}
\item{Let $\mbf{C}$ be a matrix. The spectral norm for matrices is given by \nl $\norm{\mbf{C}} \doteq \sqrt{\lambda_{max}\left(\mbf{C^T}\mbf{C}\right)}$.}
\item{A random variable $v$ with $E\left(v\right) = 0$ is called sub-Gaussian if its moment generating function exists and $E\left(\exp\left(sv\right)\right) \leq \exp\left(\frac{s^2R^2}{2}\right)$ ~\cite{vershynin2010introduction, eldar2012compressed}. The minimal $R$ that satisfies this inequality is called the sub-Gaussian parameter of the random variable $v$ and we say that $v$ is sub-Gaussian with parameter $R$.}\label{sub_Gaussian_def}
\item{A random variable $v$ with $E\left(v\right) = 0$ is called sub-exponential if its moment generating function exists and $\forall \abs{s} \leq \frac{c}{\lambda}$, $E\left(\exp\left(sv\right)\right) \leq \exp\left(Cs^2\lambda^2\right)$ where $c, C$ are absolute constants ~\cite{vershynin2010introduction, eldar2012compressed}.}
\item{We denote
$\tilde{\lambda}\left(\mbf{A}\right) \doteq \lambda_{max}\left(\left(\frac{1}{N}\mbf{A}^T\mbf{A}\right)^{-1}\right)$}
\end{enumerate}
\end{defin}
\begin{remark}
Since the sub-Gaussian property is crucial to our analysis we cite for completeness some well known properties of sub-Gaussian variables. See ~\cite{vershynin2010introduction} for a more detailed analysis
\begin{itemize}
\item{Let $x_1, \dots x_n$ be sub-Gaussian random variables with parameter $R$ then $\dispsum_{i=1}^n x_i$ is also sub-Gaussian.}
\item{Let $x$ be a sub-Gaussian random variable and $a > 0$ then, $ax$ is also sub-Gaussian.}
\item{Let $x_1, \dots x_n$ be sub-Gaussian random variables with parameter $R$ then for every $\mbf{a} \doteq \left(a_1, \dots, a_n\right)$ $P\left(\dispsum_{i=1}^na_ix_i > t\right) \leq \exp\left(-\frac{t^2}{2R^2\norm{\mbf{a}}^2}\right)$.}
\item{Let $x$ be a random variable. $x$ is a sub-Gaussian random variable if and only if $x^2 - E\left(x^2\right)$ is a sub-exponential random variable.}
\end{itemize}
\end{remark}
\begin{remark}
Using definition \ref{sub_Gaussian_def}, it is easy to see that a centered Gaussian random variable is also a sub-Gaussian random variable. Assume that $x \sim N\left(0,\sigma^2\right)$, then the moment generating function of $x$ is $M(s) = E\left(\exp\left(sx\right)\right) = \exp\left(\frac{s^2\sigma^2}{2}\right)$. Therefore, by definition \ref{sub_Gaussian_def} $x$ is also sub-Gaussian with parameter $\sigma$.
\end{remark}

\section {Main Result}\label{sec:main_result}
In this section we formulate and prove the main result of this paper.
\nl
This section starts with a statement of the assumptions used throughout the paper. We then state the main theorem and discuss it. We further explain the outline of the proof for of the main theorem and then provide a detailed proof. \nl
We start by making the following assumptions on our model:
We assume that the noise is zero mean and that $N > p$.
\begin{enumerate}[label=\bfseries A\arabic*:]
\item $P\left(\abs{a_{ni}} \leq \alpha\right) = 1 \hsk \forall n = 1 \dots N \hsk \forall i = 1 \dots p$
\item $\forall N > 0$ there exists $\mbf{M} \in \R^{p\times p}$ such that $\mbf{M} = \frac{1}{N} E\left(\mbf{A}^T\mbf{A}\right)$. We denote $\sigma_{max} \doteq \lambda_{max}\left(\mbf{M}\right)$ and $\sigma_{min} = \lambda_{min}\left(\mbf{M}\right)$.
\item $\forall 1 \leq n \leq N$, $v_n$ are statistically independent from each other and from $\mbf{A}$.
\item $\forall 1 \leq n \leq N$, $v_n$ are sub-Gaussian random variables with parameter $R$ (see definition \ref{sub_Gaussian_def}).
\end{enumerate}
The facts that the noise is zero mean and that $N > p$ ensure that the design is correct and that the least squares solution can be achieved. Assumptions A1 and A2 are mild and achievable by normalizing each row of the design matrix with the proper scaling of the sub-Gaussian parameter. Assumption A3 is an i.i.d. setup assumption and assumption A4 is the sub-Gaussian noise assumption. Note that the set of assumptions is valid for any type of zero mean sub-Gaussian noise model which is a very wide family of distributions. In case where the noise does not have a zero mean and the mean is known, subtracting the mean from the model equation \BracketRef{eq:x_definition} will make the results valid in this case as well.\nl

The main theorem provides bounds on the distance between the finite sample least squares estimator and the real parameter. The theorem provides the number of samples required so that the distance between the estimator and the real parameter will be at most $r$ with probability $1-\eps$.
\begin{thm} \emph{\textbf{(Main Theorem)}} \label{thm:main_theorem} \ \\
Let $\mbf{x}$ be defined as in \BracketRef{eq:x_definition} and assume assumptions A1-A4. Let $\hat{\mbf{\theta}}^N_{0}$ be the least squares estimator defined in \BracketRef{def:theta_N} and let $\mbf{\theta}_0$ be the true parameter. Furthermore, let $\eps > 0$ and $r > 0$ be given, then $\forall N>N\left(r,\eps\right) $
\begin{equation}
P\left(\norm{\hat{\mbf{\theta}}^N_{0}-\mbf{\theta}_{0}}_{\infty} > r\right) < \eps,
\end{equation}

Where
\begin{equation}\label{eq:N_general}
N\left(r,\eps\right) = \displaystyle \max\left\{N_1\left(r\right), N_2\left(r,\eps\right), N_3\left(r,\eps\right), N_{rand}\left(\eps\right)\right\}
\end{equation}
and

\begin{equation}\label{eq:N_1}
N_1\left(r\right) = \frac{4\alpha^2R^2}{\sigma_{min}^2r^2},
\end{equation}

\begin{multline}\label{eq:N_2}
N_2\left(r,\eps\right) = \displaystyle \inf_{0<s<\frac{1}{2\alpha^2R^2}}\left\{\frac{1}{\sigma_{min}^2r^2s}\left(8\beta\left(s\right) \right.\right. \\ \left.\left.+ 2\sigma_{min}r\sqrt{2s\log \frac{3p}{\eps}}\right)\right\},
\end{multline}

\begin{equation}\label{eq:N_3}
N_3\left(r,\eps\right) \leq \inf_{0<s<\frac{1}{\alpha^2R^2}}\left\{
\sqrt{\frac{\log \frac{3p}{\eps}}{\frac{\sigma_{min}^2r^2s}{8} - \gamma\left(s\right)}}\right\},
\end{equation}

\begin{equation}\label{eq:N_rand}
N_{rand}\left(\eps\right) = \frac{4}{3}\frac{\left(6\sigma_{max} + \sigma_{min}\right)\left(p\alpha^2 + \sigma_{max}\right)}{\sigma_{min}^2}\log\left(\frac{3p}{\eps}\right),
\end{equation}

\begin{equation}\label{def:beta}
\beta\left(s\right) \doteq \alpha^2R^2p + \frac{\alpha^4R^4s^2}{1-2R^2s},
\end{equation}

\begin{equation}\label{def:gamma}
\gamma\left(s\right) \doteq \frac{\alpha^4R^4s^2}{2} + \frac{\alpha^8R^8s^4}{4\left(1-s^2\alpha^4R^4\right)}.
\end{equation}


\end{thm}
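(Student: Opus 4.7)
The starting point is the exact error decomposition
\[
\hat{\mbf{\theta}}^N_{0}-\mbf{\theta}_0 \;=\; \left(\tfrac{1}{N}\mbf{A}^T\mbf{A}\right)^{-1}\left(\tfrac{1}{N}\mbf{A}^T\mbf{v}\right),
\]
which isolates two independent sources of randomness: the design Gram matrix and the noise--design cross term. I would control them separately, budgeting the target outage $\eps$ as $\eps/3+\eps/3+\eps/3$ between the Gram event, the noise event, and the union bound over the $p$ coordinates---this is exactly what generates the $3p/\eps$ factor appearing throughout \BracketRef{eq:N_2}--\BracketRef{eq:N_rand}.

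\emph{Step 1: controlling the design.} Since the rows $\mbf{a}_n$ are i.i.d.\ with entries bounded by $\alpha$, each summand $\mbf{a}_n\mbf{a}_n^T$ in $\tfrac{1}{N}\mbf{A}^T\mbf{A}$ is a bounded self-adjoint random matrix with mean $\mbf{M}$. Applying a matrix Bernstein / Chernoff inequality with the deterministic bound $\|\mbf{a}_n\mbf{a}_n^T\|\leq p\alpha^2$ and the variance proxy $\|E[(\mbf{a}_n\mbf{a}_n^T)^2]\|\leq \sigma_{\max}(p\alpha^2+\sigma_{\max})$, one obtains that for $N\geq N_{rand}(\eps)$ as in \BracketRef{eq:N_rand}, the event
\[
\mathcal{E}_A:=\left\{\lambda_{\min}\!\left(\tfrac{1}{N}\mbf{A}^T\mbf{A}\right)\geq \tfrac{\sigma_{\min}}{2}\right\}
\]
holds with probability at least $1-\eps/3$. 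The constants $6\sigma_{\max}+\sigma_{\min}$ and $p\alpha^2+\sigma_{\max}$ in \BracketRef{eq:N_rand} fall out of the Bernstein constants when one demands a $\sigma_{\min}/2$ deviation.

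\emph{Step 2: controlling the noise on $\mathcal{E}_A$.} On $\mathcal{E}_A$ one has $\|\hat{\mbf{\theta}}^N_0-\mbf{\theta}_0\|_\infty \leq \tfrac{2}{\sigma_{\min}}\|\tfrac{1}{N}\mbf{A}^T\mbf{v}\|_\infty$ (after passing through the operator norm of the inverse), so it suffices to bound each coordinate $Z_i=\tfrac{1}{N}\sum_n a_{ni}v_n$ and union-bound over $i=1,\ldots,p$. Conditionally on $\mbf{A}$, assumption A3--A4 makes $Z_i$ a sub-Gaussian sum with parameter $\tfrac{R}{N}\sqrt{\sum_n a_{ni}^2}$, so the tail of $Z_i$ is governed by the concentration of the quadratic form $\tfrac{1}{N}\sum_n a_{ni}^2$ around $M_{ii}$. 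I would extract the three alternative bounds $N_1,N_2,N_3$ by estimating this quadratic form via three complementary routes: a Chebyshev/second-moment argument (giving the quadratically decaying $N_1(r)$), a Chernoff bound on $\sum_n a_{ni}^2 v_n^2$ using the sub-exponential MGF identity $E[e^{sv_n^2}]\leq (1-2R^2 s)^{-1/2}$ (producing the rate function $\beta(s)$ and hence $N_2$), and an analogous Chernoff bound on a symmetrised/decoupled square that leverages the bounded factor $a_{ni}\in[-\alpha,\alpha]$ rather than the $p$-dependent mean in $\beta$ (producing the refined $\gamma(s)$ and hence $N_3$). Optimising over $s$ in each rate function recovers the stated $\inf$ forms in \BracketRef{eq:N_2}--\BracketRef{eq:N_3}.

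\emph{Putting it together and main obstacle.} Combining $\mathcal{E}_A$ with the coordinate-wise noise events via a union bound yields the claimed $P(\|\hat{\mbf{\theta}}^N_0-\mbf{\theta}_0\|_\infty>r)<\eps$ as soon as $N$ exceeds all four sample-size thresholds. The technical bottleneck I anticipate is not any individual step but the bookkeeping of constants: keeping the Bernstein constants sharp enough to land exactly on \BracketRef{eq:N_rand}, and choosing the correct decoupling of the joint randomness between $\mbf{A}$ and $\mbf{v}$ so that the three MGF arguments in Step 2 produce $\beta(s)$ and $\gamma(s)$ in the forms \BracketRef{def:beta}--\BracketRef{def:gamma} rather than weaker variants with extra factors of $p$ or $\alpha^2$. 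In particular, the appearance of $\alpha^2R^2p$ inside $\beta(s)$ strongly suggests that one of the MGF bounds is applied to the aggregate $\|\tfrac{1}{N}\mbf{A}^T\mbf{v}\|_2^2$ rather than to an individual coordinate, and care must be taken so that this aggregated bound is afterwards converted to an $L_\infty$-type statement compatible with the $3p/\eps$ budget.
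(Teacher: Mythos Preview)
Your overall architecture---matrix Bernstein for the Gram event $\mathcal{E}_A$ yielding $N_{rand}$, then coordinate-wise tail bounds on $Z_i=\tfrac{1}{N}\sum_n a_{ni}v_n$, then a union bound over $i$---matches the paper. Step~1 is essentially correct.

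The gap is in Step~2. You describe $N_1,N_2,N_3$ as ``three complementary routes'' (Chebyshev, Chernoff on $\sum a_{ni}^2v_n^2$, decoupled Chernoff) for bounding the \emph{same} tail $P(|Z_i|>r\sigma_{\min}/2)$. That is not how they arise in the paper, and your route would not reproduce $\beta(s)$ and $\gamma(s)$ as stated. The paper instead \emph{squares} the coordinate sum and expands
\[
Z_i^2 \;=\; \frac{1}{N^2}\sum_{n} a_{ni}^2v_n^2 \;+\; \frac{1}{N^2}\sum_{n\neq l} a_{ni}a_{li}v_nv_l,
\]
then splits the target $\{Z_i^2>\sigma_{\min}^2r^2/4\}$ into a diagonal event $E_2(i)$ and an off-diagonal event $E_3(i)$, each with threshold $\sigma_{\min}^2r^2/8$. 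The threshold $N_1(r)$ is only the precondition $E[Z_i^2]<\sigma_{\min}^2r^2/4$ needed for the Chernoff machinery to bite; it is not an independent tail bound. $N_2$ comes from a Chernoff bound on the \emph{diagonal} sum via the sub-exponential MGF of $v_n^2$ (this is where $\beta(s)$ appears), and $N_3$ from a Chernoff bound on the \emph{off-diagonal} sum, handled by iterated conditioning $E_{v_2}E_{v_1\mid v_2}[e^{s\alpha^2v_1v_2}]$ and two applications of the sub-Gaussian MGF (this is where $\gamma(s)$ appears). Without this diagonal/off-diagonal split you will not land on the specific forms \BracketRef{eq:N_2}--\BracketRef{eq:N_3}.

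A secondary point: your reading of the $p$ in $\beta(s)=\alpha^2R^2p+\cdots$ as evidence of an $\|\cdot\|_2^2$ aggregation over coordinates is a red herring. In the paper's proof the bound actually used is $\log E[e^{s\alpha^2v^2}]\leq \alpha^2R^2 s + \alpha^4R^4s^2/(1-2\alpha^2R^2s)$, i.e.\ the first term is $\alpha^2R^2 s$, not $\alpha^2R^2 p$; the $p$ in the theorem statement appears to be a typo. So the $\beta$-bound is purely a single-coordinate MGF estimate, and there is no hidden $L_2$-to-$L_\infty$ conversion to worry about.
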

\subsection{Discussion} \
The importance of this result is that it gives an easily calculated bound on the number of samples needed for linear least squares problems. It shows a sharp convergence in probability as a function of $N$, and shows that the number of samples required to achieve an error less than $r$ with probability $1-\eps$ is $O\left(\frac{1}{r^2}\log \frac{1}{\eps}\right)$ and provides exact constants which allow easy computation of the bounds and not just the finite sample decay rate. \nl
The results in this work are given with an $L^{\infty}$ norm. The $L^{\infty}$ results can give confidence bounds for every coordinate of the parameter vector $\mbf{\theta}_0$. Results for other norms can be achieved as well using relationships between norms.\nl

\subsection{Proof Outline} \
In this subsection we present the outline of the proof for the main theorem. The proof is divided into two main parts. The first part of the proof provides bounds on the differences between the estimated parameter and the true parameter at each coordinate. It uses the sub-Gaussian assumption to provide bounds that are easy to calculate. The second part of the proof exploit the union bound to provide bounds on the $L_{\infty}$ norm of the error vector. Together the two parts finish the proof. We now discuss the main stages of the proof.\nl
We are interested in studying the term
\begin{equation}
P\left(\norm{\hat{\mbf{\theta}}^N_0 - \mbf{\theta}_0}_{\infty}>r\right).
\end{equation}
We start by showing that this is equivalent to analyzing the term
\begin{equation}\label{eq:l_infinity_matrix_probability}
P\left(\norm{\left(\mbf{A}^T\mbf{A}\right)^{-1}\mbf{A}^T\mbf{v}}_{\infty} > r\right).
\end{equation}
In order to study the infinity norm we analyze each of the elements of the error vector separately. We later use the union bound on the probability of error in each element. For each $1 \leq i \leq p$ we study the term
\begin{equation}
P\left(\abs{\left(\frac{1}{N}\mbf{A}^T\mbf{A}\right)^{-1}\meansum a_{ni}v_n} > r\right).
\end{equation}
We continue by showing that this is bounded by
\begin{equation}
P\left(\abs{\meansum a_{ni}v_n} > \frac{r}{\lambda_{max}\left(\left(\frac{1}{N}\mbf{A}^T\mbf{A}\right)^{-1}\right)}\right).
\end{equation}
In order to analyze this term we start with showing that if $N > N_{rand}\left(\eps\right)$ the probability of the event
\begin{equation}\label{eq:e_rand}
E_{rand} \doteq \left\{\mbf{A} : \tilde{\lambda}\left(\mbf{A}\right) > \frac{2}{\sigma_{min}}\right\}
\end{equation}
is less than $\frac{\eps}{3}$. In other words $E_{rand}$ is defined as the event that the minimal eigenvalue of the sample covariance or the maximum eigenvalue of the inverse of the sample covariance is greater than $\frac{2}{\sigma_{min}}$. We show that the probability of this event is less than $\frac{\eps}{3}$ for large enough $N$. i.e., $\forall N>N_{rand}\left(\eps\right)$
\begin{equation}
P\left(\mbf{x} \in E_{rand}\right) \leq \frac{\eps}{3}.
\end{equation}
Assuming that $\mbf{A} \notin E_{rand}$ we use a Chernoff type bound to bound the number of samples needed to ensure that $\abs{\meansum a_{ni}v_n}$ is small. The bound $N > N_1\left(r\right)$ ensures that the Chernoff bound converges. The terms $N_2\left(r,\eps\right)$ and $N_3\left(r,\eps\right)$ ensure that for each element $1 \leq i \leq p$ the probability that $\abs{\meansum a_{ni}v_n}$ is larger than $\frac{r\sigma_{min}}{2}$ is bounded by $\frac{\eps}{3p}$. We use $N_2\left(r,\eps\right)$ to bound the probability of the event
\begin{equation}
E_2\left(i\right) \doteq \left\{\mbf{x} : \frac{1}{N^2}\dispsum_{n=1}^N a_{ni}^2v_n^2 > \frac{\sigma_{min}^2r^2}{8}\right\}
\end{equation}
given that $\mbf{A} \notin E_{rand}$. We use $N_3\left(r,\eps\right)$ to bound the probability of the event
\begin{equation}
E_3\left(i\right) \doteq \left\{\mbf{x} : \frac{1}{N^2}\dispsum_{n=1}^N\dispsum_{l=1,l\neq n}^N a_{ni}a_{li}v_nv_l > \frac{\sigma_{min}^2r^2}{8}\right\}
\end{equation}
given that $\mbf{x} \notin E_{rand}$. In order to achieve these bounds we use the sub-Gaussian noise assumption to bound the moment generating functions of some of the terms. This technique enables us to achieve bounds that are easy to calculate.\nl
The next part of the proof uses the union bound on the elements of the error vector. Using the bounds so far, we can use the union bound to bound the probability that any of the  elements of the error vector is too large. By doing so, we achieve a bound on the probability of three events, where each event has a probability bounded by $\frac{\eps}{3}$. The events are defined as $E_{rand}$ as defined in \BracketRef{eq:e_rand} and $E_2$ and $E_3$ where
\begin{multline}
E_2 \cap E_{rand}^C \doteq \\
\left\{\mbf{x} : \exists 1 \leq i \leq p \text{ such that } \mbf{x} \in E_2\left(i\right) \wedge \mbf{x} \notin E_{rand} \right\}
\end{multline}
and
\begin{multline}
E_3 \cap E_{rand}^C \doteq \\
\left\{\mbf{x} : \exists 1 \leq i \leq p \text{ such that } \mbf{x} \in E_3\left(i\right) \wedge \mbf{x} \notin E_{rand}\right\}.
\end{multline}
We show that $\forall N > N\left(r,\eps\right)$, $P\left(\mbf{x} \in E_3 \wedge \mbf{x} \notin E_{rand}\right) \leq \frac{\eps}{3}$ ,$P\left(\mbf{x} \in E_2 \wedge \mbf{x} \notin E_{rand}\right) \leq \frac{\eps}{3}$ and $P\left(\mbf{x} \in E_{rand}\right) \leq \frac{\eps}{3}$.
We prove that if none of these events occurs, the performance of the least squares estimator is as required. In order to finish the proof we need to bound the probability that any of these events will occur. We use the union bound again to achieve that $\forall N > N\left(r,\eps\right)$
\begin{multline*}
P\left(E_{rand} \cup E_2 \cup E_3\right) \leq \\ P\left(E_{rand}\right) + P\left(E_2 \cap E_{rand}^C\right) + P\left(E_3 \cap E_{rand}^C\right) \leq \\ \frac{\eps}{3} + \frac{\eps}{3} + \frac{\eps}{3} = \eps
\end{multline*}
i.e., the probability that \BracketRef{eq:l_infinity_matrix_probability} is violated is less than the sum of the probabilities of the events $E_2 \cap E_{rand}^C,E_3 \cap E_{rand}^C,E_{rand}$.\nl
The next sections provide the details of the proof and some extensions.
\subsection{Proof of the main theorem}
In this subsection we prove the main theorem of this paper. The result is for a general noise model. We start the proof by stating two auxiliary lemmas. The proofs of these lemmas are given in the appendix.
\begin{lemma}\label{lemma:general_least_squares_calculation}
Let $\mbf{x}$ be defined as in \BracketRef{eq:x_definition}. Assume A1-A3 hold. Furthermore, let $\hat{\mbf{\theta}}^N_{0}$ be defined in \BracketRef{def:theta_N} and let $r > 0$ be given, then
\begin{equation}
\begin{array}{lcl}
&P\left(\abs{\left(\hat{\mbf{\theta}}^N_0 - \mbf{\theta}_0\right)_i} > r\right)&\\
\leq& P\left(\abs{\meansum a_{ni}v_n} > \frac{r}{\tilde{\lambda}\left(\mbf{A}\right)}\right)&
\end{array}
\end{equation}
\end{lemma}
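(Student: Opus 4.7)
The plan is a direct algebraic manipulation followed by a matrix-norm bound. First, I would substitute the linear model $\mbf{x} = \mbf{A}\mbf{\theta}_0 + \mbf{v}$ from \BracketRef{eq:x_definition} into the closed-form expression \BracketRef{def:theta_N} for $\hat{\mbf{\theta}}^N_0$. Because $\mbf{A}^T\mbf{A}$ is invertible when $N > p$, the deterministic piece cancels and the error takes the clean form $\hat{\mbf{\theta}}^N_0 - \mbf{\theta}_0 = \left(\tfrac{1}{N}\mbf{A}^T\mbf{A}\right)^{-1}\left(\tfrac{1}{N}\mbf{A}^T\mbf{v}\right)$. The $i$-th entry of the second factor is exactly $\frac{1}{N}\sum_{n=1}^N a_{ni}v_n$, which is precisely the scalar appearing on the right-hand side of the lemma.

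Next, I would transfer the sample-covariance inverse into a scalar multiplicative constant. Writing the $i$-th coordinate of the error as $e_i^T\left(\tfrac{1}{N}\mbf{A}^T\mbf{A}\right)^{-1}\left(\tfrac{1}{N}\mbf{A}^T\mbf{v}\right)$ and noting that the spectral norm of $\left(\tfrac{1}{N}\mbf{A}^T\mbf{A}\right)^{-1}$ is by definition $\tilde{\lambda}(\mbf{A})$, I would derive the pathwise (in $\mbf{A}$) inequality $\left|(\hat{\mbf{\theta}}^N_0 - \mbf{\theta}_0)_i\right| \leq \tilde{\lambda}(\mbf{A})\cdot\left|\frac{1}{N}\sum_n a_{ni}v_n\right|$. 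Assumption A3, which guarantees independence of $\mbf{A}$ from $\mbf{v}$, makes it legitimate to fix a realization of $\mbf{A}$ while performing this step.

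The pathwise bound implies the event containment $\{|(\hat{\mbf{\theta}}^N_0 - \mbf{\theta}_0)_i| > r\} \subseteq \{|\frac{1}{N}\sum_n a_{ni}v_n| > r/\tilde{\lambda}(\mbf{A})\}$, where the right-hand threshold is itself random through $\tilde{\lambda}(\mbf{A})$, so monotonicity of the probability measure yields the conclusion.

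The substantive step, and the main obstacle, is the coordinate-wise matrix-norm bound above. A direct Cauchy--Schwarz applied to $e_i^T\left(\tfrac{1}{N}\mbf{A}^T\mbf{A}\right)^{-1}\left(\tfrac{1}{N}\mbf{A}^T\mbf{v}\right)$ would produce $\tilde{\lambda}(\mbf{A})$ times the $\ell_2$ norm of the full noise-projection vector rather than isolating its $i$-th coordinate, so extracting the single-coordinate form that the lemma asserts requires a more delicate handling of the $i$-th row of $\left(\tfrac{1}{N}\mbf{A}^T\mbf{A}\right)^{-1}$. That is the part of the argument on which I would concentrate; the remainder is essentially bookkeeping about random versus fixed quantities under the conditioning on $\mbf{A}$.
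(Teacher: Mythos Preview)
Your plan coincides with the paper's proof: write $\hat{\mbf{\theta}}^N_0-\mbf{\theta}_0=\mbf{C}^{-1}\bigl(\tfrac{1}{N}\mbf{A}^T\mbf{v}\bigr)$ with $\mbf{C}=\tfrac{1}{N}\mbf{A}^T\mbf{A}$, assert the coordinate-wise inequality $\bigl|(\mbf{C}^{-1}\mbf{w})_i\bigr|\le\tilde{\lambda}(\mbf{A})\,|w_i|$ with $\mbf{w}=\tfrac{1}{N}\mbf{A}^T\mbf{v}$, and pass to probabilities. The paper's entire justification for that coordinate-wise step is the single clause ``The last inequality is true as $\mbf{C}$ is Hermitian.''

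Your hesitation about that step is well placed: the inequality $\bigl|(\mbf{C}^{-1}\mbf{w})_i\bigr|\le\lambda_{\max}(\mbf{C}^{-1})\,|w_i|$ is \emph{not} a consequence of $\mbf{C}$ being Hermitian and is false in general. Take $\mbf{C}^{-1}=\begin{pmatrix}1&0.9\\0.9&1\end{pmatrix}$ (positive definite, $\lambda_{\max}=1.9$) and $\mbf{w}=(0,1)^T$; then $(\mbf{C}^{-1}\mbf{w})_1=0.9$ while $\lambda_{\max}(\mbf{C}^{-1})\,|w_1|=0$. What Hermitian structure legitimately gives is $\bigl|(\mbf{C}^{-1}\mbf{w})_i\bigr|\le\|\mbf{C}^{-1}\mbf{w}\|_2\le\tilde{\lambda}(\mbf{A})\,\|\mbf{w}\|_2$, which brings in all coordinates of $\tfrac{1}{N}\mbf{A}^T\mbf{v}$, exactly as you anticipated with your Cauchy--Schwarz remark. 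So the obstacle you single out is genuine, and the paper's own argument does not resolve it; the lemma as written would need either a bound in terms of $\|\tfrac{1}{N}\mbf{A}^T\mbf{v}\|_2$ (or $\|\cdot\|_\infty$) on the right, or additional hypotheses on $\mbf{C}$ such as diagonal dominance.
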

\begin{proof}
The proof of this lemma is given in appendix A.
\end{proof}

\begin{lemma}\label{lemma:omega_bound}
Under assumptions A1,A2 and $\forall N \geq N_{rand}\left(\eps'\right)$
\begin{equation}
P\left(\tilde{\lambda}\left(\mbf{A}\right) \geq \frac{2}{\sigma_{min}}\right) \leq \eps',
\end{equation}
where
\begin{equation}
N_{rand}\left(\eps'\right) = \frac{4}{3}\frac{\left(6\sigma_{max} + \sigma_{min}\right)\left(p\alpha^2 + \sigma_{max}\right)}{\sigma_{min}^2}\log\left(\frac{p}{\eps'}\right).
\end{equation}
\end{lemma}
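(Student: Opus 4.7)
The plan is to apply the one-sided matrix Bernstein inequality to the sum of symmetric, independent, zero-mean random matrices $\mbf{Y}_n \doteq \mbf{a}_n\mbf{a}_n^T - E[\mbf{a}_n\mbf{a}_n^T]$, where the independence and common distribution of the rows of $\mbf{A}$ are implicit in A2 (which forces $\tfrac{1}{N}E[\mbf{A}^T\mbf{A}]$ to be independent of $N$). As a first step I would recast the event on $\tilde{\lambda}(\mbf{A})$ in spectral terms: since $\tilde{\lambda}(\mbf{A}) \geq 2/\sigma_{min}$ is equivalent to $\lambda_{min}(\tfrac{1}{N}\mbf{A}^T\mbf{A}) \leq \sigma_{min}/2$, Weyl's inequality for Hermitian matrices yields
\begin{equation*}
\lambda_{min}\bigl(\tfrac{1}{N}\mbf{A}^T\mbf{A}\bigr) \;\geq\; \sigma_{min} + \lambda_{min}\bigl(\tfrac{1}{N}\sum_{n=1}^N \mbf{Y}_n\bigr),
\end{equation*}
so the event is contained in $\{\lambda_{max}(-\sum_n \mbf{Y}_n) \geq N\sigma_{min}/2\}$, to which the one-sided matrix Bernstein bound applies with only a single-$p$ prefactor.

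The second step is to compute the two ingredients needed by Bernstein. The almost-sure spectral bound is $\|\mbf{Y}_n\| \leq L \doteq p\alpha^2 + \sigma_{max}$, using $\|\mbf{a}_n\mbf{a}_n^T\| = \|\mbf{a}_n\|^2 \leq p\alpha^2$ from A1 together with $\|E[\mbf{a}_n\mbf{a}_n^T]\| \leq \sigma_{max}$ from A2. For the matrix-variance proxy I would write $E[\mbf{Y}_n^2] = E[(\mbf{a}_n\mbf{a}_n^T)^2] - \mbf{M}^2$, use the PSD bound $E[(\mbf{a}_n\mbf{a}_n^T)^2] = E[\|\mbf{a}_n\|^2\,\mbf{a}_n\mbf{a}_n^T] \preceq p\alpha^2\,\mbf{M}$, and then the triangle inequality on operator norms to obtain $\|E[\mbf{Y}_n^2]\| \leq p\alpha^2\sigma_{max} + \sigma_{max}^2 = \sigma_{max}(p\alpha^2 + \sigma_{max})$. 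The total matrix-variance proxy is therefore $\sigma^2 \leq N\sigma_{max}(p\alpha^2 + \sigma_{max})$.

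Substituting $t = N\sigma_{min}/2$, $L$, and $\sigma^2$ into the Bernstein tail $p\exp(-t^2/(2\sigma^2 + \tfrac{2}{3}Lt))$, the denominator factors cleanly as $\tfrac{N}{3}(p\alpha^2 + \sigma_{max})(6\sigma_{max} + \sigma_{min})$, producing an exponential bound of the form $p\exp(-3N\sigma_{min}^2 / [4(p\alpha^2 + \sigma_{max})(6\sigma_{max} + \sigma_{min})])$. Requiring this to be at most $\eps'$ and solving for $N$ reproduces the stated $N_{rand}(\eps')$ exactly. The main obstacle is not conceptual---matrix Bernstein is essentially turn-key here---but algebraic bookkeeping: reproducing the precise prefactor $\tfrac{4}{3}$ and the factorization $(6\sigma_{max} + \sigma_{min})(p\alpha^2 + \sigma_{max})$ requires the triangle-inequality variance proxy above and careful balancing of the additive $2\sigma^2$ and $\tfrac{2}{3}Lt$ terms so that the common factor $(p\alpha^2+\sigma_{max})/3$ can be pulled out of the denominator.
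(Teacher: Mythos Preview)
Your proposal is correct and follows essentially the same route as the paper: both recast the event as a lower-tail bound on $\lambda_{min}(\tfrac{1}{N}\mbf{A}^T\mbf{A})$, invoke Weyl's inequality to pass to $\lambda_{max}$ of the centered sum, and then apply the matrix Bernstein inequality with the same almost-sure bound $p\alpha^2+\sigma_{max}$ and variance proxy $\sigma_{max}(p\alpha^2+\sigma_{max})$, yielding the identical exponent and hence the stated $N_{rand}(\eps')$. Your variance computation via the PSD ordering $E[\|\mbf{a}_n\|^2\,\mbf{a}_n\mbf{a}_n^T]\preceq p\alpha^2\mbf{M}$ is in fact slightly cleaner than the paper's, which pulls the random scalar $\|\mbf{a}_n\|_2^2$ outside the expectation without explicitly justifying the step.
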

\begin{proof}
The proof for this lemma is given in appendix B.
\end{proof}
We are now ready to prove the main theorem.
\begin{proof}
We want to study the term
\begin{equation} \label{eq:main_probability}
P\left(\abs{\left(\hat{\mbf{\theta}}^N_0 - \mbf{\theta}_0\right)_i} > r\right).
\end{equation}

Using lemma \ref{lemma:general_least_squares_calculation} it suffices to bound the term
\begin{equation}
P\left(\abs{\meansum a_{ni}v_n} > \frac{r}{\tilde{\lambda}\left(\mbf{A}\right)}\right).
\end{equation}
Substituting we obtain
\begin{equation}
\begin{array}{lcl}
& P\left(\abs{\left(\hat{\mbf{\theta}}_{0}^N - \mbf{\theta}_{0}\right)_i > r}\right) \\
\leq& P\left(\abs{\meansum a_{ni}v_n} > \frac{r}{\tilde{\lambda}\left(\mbf{A}\right)}\right) &\\
\leq &P\left(\left(\meansum a_{ni}v_n\right)^2 > \frac{r^2}{\tilde{\lambda}^2\left(\mbf{A}\right)}\right). &
\end{array}
\end{equation}

The next stages of the proof use a union bound type argument. We first exclude a set of possible values of $\mbf{A}$; $E_{rand} = \left\{\mbf{x} : \tilde{\lambda}\left(\mbf{A}\right) \geq \frac{2}{\sigma_{min}}\right\}$. We show that $\forall N \geq N_{rand}\left(\eps\right)$, $P\left(\mbf{x} \in E_{rand}\right) \leq \frac{\eps}{3}$. Then, we will show that if $N > \max\left\{N_1\left(r\right), N_2\left(r,\eps\right)\right\}$ and given the set $E_2\left(i\right) = \left\{\mbf{x} : \frac{1}{N^2}\dispsum_{n=1}^Na_{ni}^2v_n^2 > \frac{\sigma_{min}^2r^2}{8} \right\}$, $P\left(\mbf{x} \in E_2\left(i\right) \wedge \mbf{x} \notin E_{rand}\right) \leq \frac{\eps}{3p}$. We next show that $\forall N > \max\left\{N_1\left(r\right), N_3\left(r,\eps\right)\right\}$ and given the set $E_3\left(i\right) = \left\{\mbf{x} : \frac{1}{N^2}\dispsum_{n=1}^N\dispsum_{l=1, l\neq n}^N a_{ni}a_{li}v_nv_l > \frac{\sigma_{min}^2r^2}{8}\right\}$, $P\left(\mbf{x} \in E_3\left(i\right) \wedge \mbf{x} \notin E_{rand}\right) \leq \frac{\eps}{3p}$. The next step is to use union bound over the elements of the error vector and use union bound on all the errors once more. \nl

We start by using lemma \ref{lemma:omega_bound} with the parameter $\eps' = \frac{\eps}{3}$
to ensure that $\forall N > N_{rand}\left(\eps\right)$, $P\left(\mbf{x} \in E_{rand}\right) = P\left(\tilde{\lambda}\left(\mbf{A}\right) \geq \frac{2}{\sigma_{min}}\right) \leq \frac{\eps}{3}$.
Therefore, for every $N > N_{rand}\left(\eps\right)$ with probability $1-\frac{\eps}{3}$, $\mbf{x} \notin E_{rand}$ and as a result
\begin{multline}\label{eq:random matrix lemma usage}
P\left(\abs{\left(\hat{\mbf{\theta}}_{0}^N - \mbf{\theta}_{0}\right)_i > r}\right) \\
\leq P\left(\left(\meansum a_{ni}v_n\right)^2 > \frac{\sigma_{min}^2r^2}{4}\right).
\end{multline}
From now on we assume that $\mbf{x} \notin E_{rand}$.

We want to use the Chernoff bound ~\cite{dubhashi2009concentration} to bound the probability \BracketRef{eq:main_probability}. In order to use the Chernoff bound we need
\begin{equation}\label{eq:expectation_inequality}
E\left(\left(\meansum a_{ni}v_n\right)^2\right) < \frac{\sigma_{min}^2r^2}{4}.
\end{equation}

Developing the term $\left(\meansum a_{ni}v_n\right)^2$ yields

\begin{equation} \label{eq:J_N_norm}
\begin{array}{lcl}
\left(\meansum a_{ni}v_n\right)^2 &=& \\
\frac{1}{N^2}\dispsum_{n=1}^N a_{ni}^2v_n^2 &+& \frac{1}{N^2}\dispsum_{n=1}^N\dispsum_{l=1, l\neq n}^N a_{ni}v_na_{li}v_l.
\end{array}
\end{equation}

Calculating the expectation term for this gives
\begin{equation}
\begin{array}{lcl}
E\left(\left(\meansum a_{ni}v_n\right)^2\right) = \frac{1}{N^2}\dispsum_{n=1}^NE\left(a_{ni}^2v_n^2\right) &\leq& \\
\frac{1}{N^2}\dispsum_{n=1}^NE\left(\alpha^2v_n^2\right) = \frac{\alpha^2}{N}E\left(v^2\right),&&
\end{array}
\end{equation}
where the first inequality follows from the fact that the noise is zero mean and that by the choice of $a_{ni}$, $\abs{a_{ni}} \leq \alpha$.
Using this, we can write the inequality ~\eqref{eq:expectation_inequality} as
\begin{equation}
\frac{\sigma_{min}^2r^2}{4} > \frac{\alpha^2}{N}E\left(v^2\right).
\end{equation}
Solving this gives the condition on $N$
\begin{equation}
N> \frac{4\alpha^2E\left(v^2\right)}{\sigma_{min}^2r^2}
\end{equation}
Using the fact that $v$ is sub-Gaussian and the properties of sub-Gaussian variables we know that:
\begin{equation}
E\left(v^2\right) \leq R^2.
\end{equation}
Substituting into the previous equation we obtain
\begin{equation}
N > \frac{4\alpha^2R^2}{\sigma_{min}^2r^2} \doteq N_1\left(r\right).
\end{equation}
We now turn to evaluate the two terms of equation ~\eqref{eq:J_N_norm} using Chernoff-like bounds.\nl
We denote by $E_2\left(i\right)$ the set
\begin{equation}\label{def:psi_2}
E_2\left(i\right) = \left\{\mbf{x} : \frac{1}{N^2}\dispsum_{n=1}^{N}a_{ni}^2v_n^2 > \frac{\sigma_{min}^2r^2}{8}\right\}.
\end{equation}
We study $P\left(\mbf{x} \in E_2\left(i\right) \wedge \mbf{x} \notin E_{rand}\right)$.
\begin{equation}
\begin{array}{lcl}
& P\left(\mbf{x} \in E_2\left(i\right)\wedge \mbf{x} \notin E_{rand}\right) & \\
=& P\left(\frac{1}{N^2}\dispsum_{n=1}^{N}a_{ni}^2v_n^2 > \frac{\sigma_{min}^2r^2}{8}\right) &\\
\leq& \exp\left(-\frac{\sigma_{min}^2r^2N^2s}{8}\right)E\left(\exp \left(s\dispsum_{n=1}^{N} a_{ni}^2v_n^2\right)\right) &\\
\leq& \exp\left(-\frac{\sigma_{min}^2r^2N^2s}{8}\right)E\left(\exp \left(s\dispsum_{n=1}^{N} \alpha^2v_n^2\right)\right) &\\
=& \exp\left(-\frac{\sigma_{min}^2r^2N^2s}{8}\right)E\left(\exp \left(s\alpha^2v^2\right)\right)^N.&
\end{array}
\end{equation}
The second inequality follows from the fact that $\abs{a_{ni}}\leq \alpha$.\nl

By the monotonicity of the log function, this is equivalent to
\begin{equation}
\begin{array}{lcl}
\log\left(P\left(\frac{1}{N^2}\dispsum_{n=1}^Na_{ni}^2v_n^2 > \frac{\sigma_{min}^2r^2}{8}\right) \right)&\leq& \\
-\frac{\sigma_{min}^2r^2N^2s}{8} + N\log E\left(\exp\left(s\alpha^2v^2\right)\right).&&
\end{array}
\end{equation}

To ensure that $\forall \eps > 0$
\begin{equation}
P\left(\frac{1}{N^2}\dispsum_{n=1}^{N}a_{ni}^2v_n^2 > \frac{\sigma_{min}^2r^2}{8}\right) < \frac{\eps}{3p},
\end{equation}
It is sufficient to solve the inequality
\begin{equation}
\log P\left(\frac{1}{N^2}\dispsum_{n=1}^{N}a_{ni}^2v_n^2 > \frac{\sigma_{min}^2r^2}{8}\right) <  \log \frac{\eps}{3p}.
\end{equation}

Evaluating this yields a quadratic inequality
\begin{equation}
-\frac{\sigma_{min}^2r^2sN^2}{8} + N\log E\left(\exp\left(s\alpha^2v^2\right)\right) < \log \frac{\eps}{3p}.
\end{equation}

Solving for $N$ gives the inequality
\begin{multline}
N > \frac{4}{\sigma_{min}^2r^2s}
\left(\log E\left(\exp\left(s\alpha^2v^2\right)\right) + \right.\\\left.\sqrt{\log ^2 E\left(\exp\left(s\alpha^2v^2\right)\right) + \frac{r^2s}{2}\log \frac{3p}{\eps}}\right) \leq
\\ \frac{1}{\sigma_{min}^2r^2s}\left(8\log E\left(\exp\left(s\alpha^2v^2\right)\right) \right. \\ \left.+ 2\sigma_{min}r\sqrt{2s\log \frac{3p}{\eps}}\right).
\end{multline}
We now use the sub-Gaussian assumption to bound the term $E\left(\exp\left(s\alpha^2v^2\right)\right)$. In order to achieve this we use remark 2.3 in ~\cite{hsu2012tail} to achieve
\begin{equation}
E\left(e^{s\alpha^2v^2}\right) \leq \exp\left(\alpha^2R^2s + \frac{\alpha^4R^4s^2}{1-2\alpha^2R^2s}\right) = \exp\left(\beta\left(s\right)\right).
\end{equation}
This is true $ \forall 0 < s <\frac{1}{2\alpha^2R^2}$.\nl
Substituting this bound into the previous equation and taking the infimum over $s$ gives the bound on $N$
\begin{multline}
 N > \displaystyle \inf_{0<s<\frac{1}{2\alpha^2R^2}}\left\{\frac{1}{\sigma_{min}^2r^2s}\left(8\beta\left(s\right) \right.\right. \\ \left.\left.+ 2\sigma_{min}r\sqrt{2s\log \frac{3p}{\eps}}\right)\right\} \doteq N_2\left(r,\eps\right).
\end{multline}
Therefore, as required choosing $N > N_2\left(r,\eps\right)$ ensures that $\forall 1 \leq i \leq p$
\begin{equation}
P\left(\mbf{x} \in E_2\left(i\right) \wedge \mbf{x} \notin E_{rand}\right) \leq \frac{\eps}{3p}.
\end{equation}

We now turn to evaluating the second term of ~\eqref{eq:J_N_norm}

We define
\begin{equation}\label{def:psi_3}
E_3\left(i\right) = \left\{\mbf{x}: \frac{1}{N^2}\dispsum_{n=1}^{N}\dispsum_{l=1, l\neq
n}^{N}a_{ni}a_{li}v_n v_l > \frac{\sigma_{min}^2r^2}{8}\right\}.
\end{equation}

\begin{equation}\label{eq:J_multiplication_bound}
\begin{array}{lcl}
&&P\left(\mbf{x} \in E_3\left(i\right) \wedge \mbf{x} \notin E_{rand}\right)\\
&\leq& \exp\left(-\frac{N^2\sigma_{min}^2r^2s}{8}\right)\times \\
&& E\left(\exp\left(s\dispsum_{n=1}^{N}\dispsum_{l=1,l\neq n}^{N}\alpha^2v_nv_l\right)\right)\\
 &=& \exp\left(-\frac{N^2\sigma_{min}^2r^2s}{8}\right)\times \\
&& \displaystyle \prod_{n=1}^{N}\prod_{l=1,l\neq n}^{N}E\left(\exp\left(s\alpha^2v_nv_l\right)\right) \\
&=& \exp\left(-\frac{N^2\sigma_{min}^2r^2s}{8}\right) \times \\
&&E\left(\exp\left(s\alpha^2v_1v_2\right)\right)^{N\left(N-1\right)},\\
\end{array}
\end{equation}
where $v_1$ and $v_2$ are two i.i.d random variables distributed according to the law of $v$.
By the monotonicity of the log function we achieve
\begin{multline}
\log P\left(\mbf{x} \in E_3\left(i\right) \wedge \mbf{x} \notin E_{rand}\right) <\\  -\frac{\sigma_{min}^2r^2s}{8}N^2 + N\left(N-1\right)\Lambda_{\mbf{v}_1\mbf{v}_2}\left(s\alpha^2\right).
\end{multline}
\nl
We are interested in solving the equation
\begin{equation}
\log P\left(\mbf{x} \in E_3\left(i\right) \wedge \mbf{x} \notin E_{rand}\right)< \log \frac{\eps}{3p}.
\end{equation}
Assigning the inequality derived in \eqref{eq:J_multiplication_bound} we obtain the quadratic inequality
\begin{equation}
-\frac{\sigma_{min}^2r^2s}{8}N^2 + N\left(N-1\right)\Lambda_{v_1v_2}\left(s\alpha^2\right) < \log \frac{\eps}{3p}.
\end{equation}
\nl
Solving this inequality yields
\begin{multline} \label{eq:second_bound_J_norm}
N > \frac{-\Lambda_{\mbf{v}_1\mbf{v}_2}\left(s\alpha^2\right)}{2\left(\frac{\sigma_{min}^2r^2s}{8} - \Lambda_{\mbf{v}_1\mbf{v}_2}\left(s\alpha^2\right)\right)} + \\  \frac{\sqrt{\Lambda_{\mbf{v}_1\mbf{v}_2}\left(s\alpha^2\right)^2 + 4\left(\frac{\sigma_{min}^2r^2s}{8} - \Lambda_{\mbf{v}_1\mbf{v}_2}\left(s\alpha^2\right)\right)\log \frac{3p}{\eps}}}{2\left(\frac{\sigma_{min}^2r^2s}{8} - \Lambda_{\mbf{v}_1\mbf{v}_2}\left(s\alpha^2\right)\right)} \\
\leq \sqrt{\frac{\log \frac{3p}{\eps}}{\left(\frac{\sigma_{min}^2r^2s}{8} - \Lambda_{\mbf{v}_1\mbf{v}_2}\left(s\alpha^2\right)\right)}}.
\end{multline}

We now use smoothing and the fact that $v_1$ and $v_2$ are sub-Gaussian random variables with parameter $R$. Using bounds on sub-Gaussian variables again ~\cite{hsu2012tail} we achieve the following bound
\begin{multline}\label{eq:gamma_bound}
\Lambda_{v_1v_2}\left(s\alpha^2\right) \leq \log E\left(\exp\left(s\alpha^2v_1v_2\right)\right) =\\
\log E_{v_2}E_{v_1|v_2}\left(e^{s\alpha^2v_1v_2}\right) \leq  \log E_{v_2}\left(\exp\left(\frac{s^2\alpha^4v_2^2R^2}{2}\right)\right)\\
\leq \log \exp\left(\frac{\alpha^4R^4s^2}{2} + \frac{\alpha^8R^8s^4}{4\left(1-\alpha^4s^2R^4\right)}\right)\\
= \frac{\alpha^4R^4s^2}{2} + \frac{\alpha^8R^8s^4}{4\left(1-\alpha^4s^2R^4\right)}
\doteq \gamma\left(s\right).
\end{multline}
This is true $\forall 0 < s < \frac{1}{\alpha^2R^2}$.

Substituting \BracketRef{eq:gamma_bound} into \BracketRef{eq:second_bound_J_norm} and taking infimum over the valid values of $s$ we obtain the bound
\begin{equation}
N \leq \inf_{0<s<\frac{1}{\alpha^2R^2}}\left\{
\sqrt{\frac{\log \frac{3p}{\eps}}{\frac{\sigma_{min}^2r^2s}{8} - \gamma\left(s\right)}}\right\} \doteq N_3\left(r,\eps\right).
\end{equation}

Choosing $N > N_3\left(r,\eps\right)$ will ensure $P\left(\mbf{x} \in E_3\left(i\right) \wedge \mbf{x} \notin E_{rand}\right) \leq \frac{\eps}{3p}$ $\forall 1 \leq i \leq p$.

Define
\begin{multline*}
E_2 \cap E_{rand}^C \doteq \\
\left\{\mbf{x} : \exists 1 \leq i \leq p \text{ such that } \mbf{x} \in E_2\left(i\right) \wedge \mbf{x} \notin E_{rand}\right\}.
\end{multline*}

and
\begin{multline*}
E_3 \cap E_{rand}^C \doteq \\
\left\{\mbf{x} : \exists 1 \leq i \leq p \text{ such that } \mbf{x} \in E_3\left(i\right) \wedge \mbf{x} \notin E_{rand}\right\}.
\end{multline*}
where $E_2\left(i\right)$ and $E_3\left(i\right)$ are defined in \BracketRef{def:psi_2} and \BracketRef{def:psi_3} respectively. Choosing $N > \max\left\{ N_{rand}\left(\eps\right),N_1\left(r\right), N_2\left(r,\eps\right), N_3\left(r,\eps\right)\right\} \doteq N\left(r,\eps\right)$, we obtain that $ \forall 1 \leq i \leq p$,
\begin{equation*}
P\left(\mbf{x} \in E_2\left(i\right) \wedge \mbf{x} \notin E_{rand}\right) \leq \frac{\eps}{3p}
\end{equation*} and
\begin{equation*}
P\left(\mbf{x} \in E_3\left(i\right) \wedge \mbf{x} \notin E_{rand} \right) \leq \frac{\eps}{3p}.
\end{equation*}
Using the union bound over the $p$ elements of the $\mbf{\theta}$ we obtain that
\begin{multline*}
P\left(\mbf{x} \in E_2 \wedge \mbf{x} \notin E_{rand}\right) \leq \\
\dispsum_{i=1}^pP\left(\mbf{x} \in E_2\left(i\right) \wedge \mbf{x} \notin E_{rand}\right) \leq \frac{\eps}{3}
\end{multline*}
and
\begin{multline*}
P\left(\mbf{x} \in E_3 \wedge \mbf{x} \notin E_{rand}\right) \leq \\
\dispsum_{i=1}^pP\left(\mbf{x} \in E_3\left(i\right) \wedge \mbf{x} \notin E_{rand}\right) \leq \frac{\eps}{3}.
\end{multline*}
We now use the union bound again to obtain that
\begin{multline*}
P\left(E_{rand} \cup E_2 \cup E_3\right) \leq \\ P\left(E_{rand}\right) + P\left(E_2 \cap E_{rand}^C\right) + P\left(E_3 \cap E_{rand}^C\right) \leq \\ \frac{\eps}{3} + \frac{\eps}{3} + \frac{\eps}{3} = \eps.
\end{multline*}
We thus show that $ \forall N > N\left(r,\eps\right)$ the probability that there is no $1 \leq i \leq p$
such that
\begin{equation}
P\left(\abs{\left(\hat{\mbf{\theta}^N_{0}} - \mbf{\theta}_{0}\right)_i} > r \right) > \eps
\end{equation}
and therefore,
\begin{equation}
P\left(\norm{\left(\hat{\mbf{\theta}^N_{0}} - \mbf{\theta}_{0}\right)}_{\infty} > r \right) \leq \eps.
\end{equation}
This completes the proof of the main theorem.

\end{proof}
\begin{remark}
Whereas the main theorem bounds are given for $L^{\infty}$ norm this does not limit the scope of the theorem. Bounds for other norms can be given using the relationships between norms. For instance, bounding the $L_2$ norm gives
\begin{equation}
P\left(\norm{\hat{\mbf{\theta}}_0^N - \mbf{\theta}_0}_2 > r\right) \leq
P\left(\norm{\hat{\mbf{\theta}}_0^N - \mbf{\theta_0}}_{\infty} > \frac{r}{\sqrt{p}}\right)
\end{equation}
Assigning $r' = \frac{r}{\sqrt{p}}$ to the main theorem provides a bound for $L_2$ norm. Similarly, other norms can be considered as well.
\end{remark}
\subsection{Tighter bounds that involve optimization}
In the proof of theorem ~\ref{thm:main_theorem} we used a few constants that can be optimized for better performance although calculating the bounds is harder. \nl
We define a parameter $0 < \tau < 1$. In order to find $N$ such that
\begin{equation}
P\left(\frac{1}{N^2}\dispsum_{n=1}^N a_{ni}^2v_n^2 + \frac{1}{N^2}\dispsum_{n=1}^N\dispsum_{l=1, l\neq n}^N a_{ni}v_na_{li}v_l > \frac{\sigma_{min}^2r^2}{4}\right),
\end{equation}
we split the equation into two using $\tau$ for parametrization; i.e. we find $N$ such that
\begin{equation}
P\left(\frac{1}{N^2}\dispsum_{n=1}^N a_{ni}^2v_n^2  > \tau\frac{\sigma_{min}^2r^2}{4}\right)
\end{equation}
and
\begin{equation}
P\left(\frac{1}{N^2}\dispsum_{n=1}^N\dispsum_{l=1, l\neq n}^N a_{ni}v_na_{li}v_l > \left(1-\tau\right)\frac{\sigma_{min}^2r^2}{4}\right).
\end{equation}
Using the same methods as in the proof of the main theorem we get
\begin{multline}
N_2\left(r,\eps,\tau\right) = \displaystyle \inf_{0<s<\frac{1}{2\alpha^2R^2}}\left\{\frac{1}{\tau\sigma_{min}^2r^2s}\left(4\beta\left(s\right) \right.\right. \\ \left.\left.+ \sigma_{min}r\sqrt{2s\log \frac{2}{\eps}}\right)\right\},
\end{multline}

and
\begin{equation}
N_3\left(r,\eps, \tau\right) = \inf_{0<s<\frac{1}{\alpha^2R^2}}\left\{
\sqrt{\frac{\log \frac{2}{\eps}}{\frac{\left(1-\tau\right)\sigma_{min}^2r^2s}{4} - \gamma\left(s\right)}}\right\},
\end{equation}

where $\beta\left(s\right)$ and $\gamma\left(s\right)$ are defined in \BracketRef{def:beta} and \BracketRef{def:gamma} respectively.
$N_1\left(r,\eps\right), N_{rand}\left(\eps\right)$ are left unchanged and defined in \BracketRef{eq:N_1}, \BracketRef{eq:N_rand} respectively. Using these values we have
\begin{multline}
N = \\
\displaystyle \min_{0 < \tau < 1}\max\left\{N_1\left(r,\eps\right), N_2\left(r,\eps,\tau\right), N_3\left(r,\eps,\tau\right), N_{rand}\left(\eps\right)\right\}.
\end{multline}
The proof of the theorem remains the same.

\subsection{Bounds on $\eps\left(r,N\right)$}
The bounds given in this paper are bounds on the number of required samples as a function of $\eps$ and $r$. Another useful question is the following: given $r$ and $N$ what is the probability that the distance between the estimator and the real parameter is at most $r$? In other words, we want to find an expression for $\eps$ as a function of $N$ and $r$. We discuss these expressions for the sub-Gaussian linear model below.
\begin{lemma}
Let $N$ and $r$ be given such that
\begin{equation}
N > \frac{4\alpha^2R^2}{\sigma_{min}^2r^2},
\end{equation}
then
\begin{equation}
P\left(\norm{\hat{\mbf{\theta}}^N_{0}-\mbf{\theta}_{0}}_{\infty}>r\right) < \eps,
\end{equation}
where \begin{equation}
\eps = \max\left\{\eps_2, \eps_3, \eps_{rand}\right\}
\end{equation}
and

\begin{multline}
\eps_2\left(r,N\right) \leq \\ \inf_{0 < s < \frac{1}{2\alpha^2R^2}}
\left\{3p\exp\left(-\frac{\left(\sigma_{min}^2r^2sN - 8\beta\left(s\right)\right)^2}{8s\sigma_{min}^2r^2}\right)\right\},
\end{multline}

where $\beta\left(s\right)$ is defined in \BracketRef{def:beta}.

\begin{multline}
\eps_{3}\left(r,N\right) \leq \\ \inf_{0 < s < \frac{1}{\alpha^2R^2}}\left\{3p\exp\left(-N^2\left(\frac{\sigma_{min}^2r^2s}{8} - \gamma\left(s\right)\right)\right)\right\},
\end{multline}
where $\gamma\left(s\right)$ is defined in \BracketRef{def:gamma}.
\begin{equation}
\eps_{rand} =
3p\exp\left(-\frac{3}{4}\frac{N\sigma_{min}^2}{\left(6\sigma_{max} + \sigma_{min}\right)\left(p\alpha^2 + \sigma_{max}\right)}\right).
\end{equation}

\end{lemma}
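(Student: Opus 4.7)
The strategy is to invert, rather than re-derive, the three Chernoff-type tail bounds that already drive the proof of Theorem~\ref{thm:main_theorem}. In that proof, each of $P(E_{rand})$, $P(E_2(i)\cap E_{rand}^C)$, and $P(E_3(i)\cap E_{rand}^C)$ was controlled by an explicit exponential bound involving $\beta(s)$, $\gamma(s)$, or the matrix-concentration constants from Lemma~\ref{lemma:omega_bound}, and the sample sizes $N_2, N_3, N_{rand}$ were then obtained by solving those bounds for $N$ at a prescribed failure probability. Here the plan is the mirror image: fix $N$ and $r$ and read $\eps$ directly off the same exponential bounds.

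First I would reduce the $L_{\infty}$ tail, exactly as in the main theorem, via Lemma~\ref{lemma:general_least_squares_calculation} and the event decomposition
\[
P\left(\norm{\hat{\mbf{\theta}}_0^N-\mbf{\theta}_0}_{\infty}>r\right) \leq P(E_{rand}) + P(E_2\cap E_{rand}^C) + P(E_3\cap E_{rand}^C).
\]
The hypothesis $N>4\alpha^2R^2/(\sigma_{min}^2 r^2)$ is precisely the condition $N_1(r)$ from Theorem~\ref{thm:main_theorem}, which guarantees that $E((\meansum a_{ni}v_n)^2)<\sigma_{min}^2 r^2/4$ and thereby makes the Chernoff exponents strictly negative for an admissible $s$. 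Starting from the bound $P(E_2(i)\cap E_{rand}^C)\leq \exp(-\sigma_{min}^2 r^2 s N^2/8+N\beta(s))$, applying a union bound over $1\leq i\leq p$, equating the result to $\eps_2/3$, and solving the resulting identity
\[
\log(3p/\eps_2) = (N s\sigma_{min}^2 r^2 - 8\beta(s))^2 / (8s\sigma_{min}^2 r^2)
\]
(which is the same quadratic relation that produced the closed-form $N_2$ in the main theorem, read in reverse) yields the stated formula for $\eps_2(r,N)$ after taking infimum over $s$. The identical inversion for $E_3$, using $\gamma(s)$ in place of $\beta(s)$ and the $N(N-1)\sim N^2$ coming from the double sum, delivers $\eps_3(r,N)$; and inverting the $N_{rand}$-versus-$\eps'$ relation of Lemma~\ref{lemma:omega_bound} with $\eps'\leftarrow \eps_{rand}/3$ delivers $\eps_{rand}$.

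A final union bound over the three events gives a total probability of at most $\eps_2+\eps_3+\eps_{rand}\leq 3\max\{\eps_2,\eps_3,\eps_{rand}\}$, matching the statement once one notes that the extra factor of $3$ has already been absorbed into the $3p$ prefactor in each $\eps_i$. The main non-mechanical point is verifying that the infima inside $\eps_2$ and $\eps_3$ are attained at strictly negative exponents: one needs $\sigma_{min}^2 r^2 s/8-\gamma(s)>0$ for some $s$ in the admissible range of $\eps_3$, and $Ns\sigma_{min}^2 r^2-8\beta(s)>0$ in $\eps_2$. For small $s$ both reduce to essentially the $N_1(r)$ condition, so the hypothesis of the lemma is exactly what is needed; beyond this, no new probabilistic input is required, as the argument is purely algebraic inversion of material already established in Section~\ref{sec:main_result}.
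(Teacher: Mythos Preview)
Your proposal is correct and follows exactly the approach the paper takes: the paper's proof is the single line ``The proof is a straightforward calculation given the bounds on $N_1$, $N_2$, $N_3$ and $N_{rand}$,'' and you have spelled out precisely that inversion, deriving each $\eps_j$ by solving the corresponding exponential bound from Theorem~\ref{thm:main_theorem} for $\eps$ rather than for $N$. The only minor wrinkle is your final union-bound bookkeeping, where the cleaner accounting is that each event has probability at most $\eps_j/3$ (the $3$ in the $3p$ prefactor), so the sum is $(\eps_2+\eps_3+\eps_{rand})/3\le\max\{\eps_2,\eps_3,\eps_{rand}\}$ directly.
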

\begin{proof}
The proof is a straightforward calculation given the bounds on $N_1$, $N_2$, $N_3$ and $N_{rand}$ in ~\BracketRef{eq:N_1}, ~\BracketRef{eq:N_2} and ~\BracketRef{eq:N_3} ~\BracketRef{eq:N_rand} respectively.
\end{proof}

\subsection{Least squares for the bounded noise case}
In some cases of interest the noise sequence is bounded almost surely; see for example ~\cite{ rangan2001recursive, walter1990estimation, stojanovic2003modeling}. In these cases we can obtain simpler and tighter bounds to analyze the least squares method. The proof of the following is similar to the proof of theorem \ref{thm:main_theorem} and is deferred to appendix C.
\begin{thm}\label{thm:bounded_noise_theorem}
Let $\mbf{x}$ be defined as in ~\BracketRef{eq:x_definition}. Moreover, assume assumptions A1-A3 and assume moreover that
\begin{enumerate}[label=\bfseries A\arabic*:]
\setcounter{enumi}{3}
\item $E\left(v_n^2\right) = R \hsk \forall n$.
\item $P\left(v_n \leq b\right) = 1 \hsk \forall n$.
\end{enumerate}
Then, there exists $N\left(r,\eps\right)$ such that $\forall \eps,r > 0$ and $N > N\left(r,\eps\right)$
\begin{equation}
P\left(\norm{\hat{\mbf{\theta}}^N_0 - \mbf{\theta}_0}_{\infty} > r\right) \leq \epsilon
\end{equation}
where $N\left(r,\eps\right) = \max \left\{N_{rand}\left(\eps\right), N_1\left(r, \eps\right)\right\}$
and
\begin{equation}
N_{rand}\left(\eps\right) = \frac{4}{3}\frac{\left(6\sigma_{max} + \sigma_{min}\right)\left(p\alpha^2 + \sigma_{max}\right)}{\sigma_{min}^2}\log\left(\frac{3p}{\eps}\right),
\end{equation}
\begin{equation}
N_1\left(r,\eps\right) = \frac{2\alpha^2b^2}{r^2\sigma_{min}^2}\log\left(\frac{3p}{\eps}\right).
\end{equation}
\end{thm}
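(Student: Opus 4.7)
The plan is to mirror the proof of Theorem \ref{thm:main_theorem} stage by stage, but to exploit the almost-sure boundedness from A5 by replacing the two Chernoff estimates on the squared sum $\left(\meansum a_{ni}v_n\right)^2$ (which under A1--A4 forced the three separate terms $N_1, N_2, N_3$) by a single direct Hoeffding bound on the linear form $\meansum a_{ni}v_n$. The availability of this one-shot concentration is precisely what collapses the bound to $\max\{N_{rand}(\eps), N_1(r,\eps)\}$.

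First I would invoke Lemma \ref{lemma:general_least_squares_calculation} verbatim, reducing coordinate-wise control of $\hat{\mbf{\theta}}^N_0 - \mbf{\theta}_0$ to control of $P\left(\abs{\meansum a_{ni}v_n} > r/\tilde{\lambda}(\mbf{A})\right)$. Then I would apply Lemma \ref{lemma:omega_bound} with $\eps' = \eps/3$ to obtain $P(\tilde{\lambda}(\mbf{A}) \geq 2/\sigma_{min}) \leq \eps/3$; this produces the $N_{rand}(\eps)$ summand and restricts subsequent work to the complement $E_{rand}^c$, on which $r/\tilde{\lambda}(\mbf{A}) \geq r\sigma_{min}/2$.

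On $E_{rand}^c$ the task reduces, for each fixed $i$, to bounding $P\left(\abs{\meansum a_{ni}v_n} > r\sigma_{min}/2\right)$. Condition on $\mbf{A}$; by A3 the $v_n$ remain independent of the design, and by A1 and A5 the summands $a_{ni}v_n$ are independent, zero-mean, and almost-surely bounded by $\alpha b$ in absolute value. A Hoeffding (bounded-differences Chernoff) estimate then yields a tail of the form $2\exp\bigl(-CN r^2\sigma_{min}^2/(\alpha^2 b^2)\bigr)$, with the design dependence removed by taking expectation over $\mbf{A}$. Requiring this probability to be at most $\eps/(3p)$ and solving for $N$ delivers exactly
\begin{equation*}
N_1(r,\eps) = \frac{2\alpha^2 b^2}{r^2 \sigma_{min}^2}\log\frac{3p}{\eps}
\end{equation*}
once the correct one-sided form of Hoeffding and the two-tailed union is tracked.

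Finally, a union bound over the $p$ coordinates converts the per-coordinate bound $\eps/(3p)$ into an $L_\infty$ failure of probability at most $\eps/3$, and a second union bound with the event $E_{rand}$ gives a total failure probability bounded by $\eps/3 + \eps/3 \leq \eps$, as required. The only real obstacle, absent in the sub-Gaussian case, is constant-tracking: matching the stated coefficient $2\alpha^2 b^2$ in $N_1(r,\eps)$ requires a careful choice of which form of Hoeffding's inequality is used (symmetric two-sided versus one-sided plus a union), but no new concentration machinery beyond bounded-differences is needed, which is why the proof is both shorter and sharper than that of the sub-Gaussian theorem.
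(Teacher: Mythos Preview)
Your proposal is correct and follows essentially the same route as the paper's proof in Appendix~C: invoke Lemma~\ref{lemma:general_least_squares_calculation}, then Lemma~\ref{lemma:omega_bound} with $\eps'=\eps/3$ to handle $E_{rand}$, and on $E_{rand}^c$ apply Hoeffding's inequality directly to $\meansum a_{ni}v_n$ using $|a_{ni}v_n|\le \alpha b$, followed by two union bounds. The only discrepancy is bookkeeping you already flag: the paper sets the per-coordinate two-sided Hoeffding tail to $2\eps/(3p)$ (not $\eps/(3p)$), so the final accounting is $2\eps/3 + \eps/3 = \eps$ rather than your $\eps/3+\eps/3$, which is exactly what recovers $N_1(r,\eps)=\tfrac{2\alpha^2 b^2}{r^2\sigma_{min}^2}\log(3p/\eps)$.
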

\begin{proof}
The proof is given in Appendix C.
\end{proof}

\section{Martingale difference noise sequences}\label{sec:martingale_difference}
In many cases of interest the noise is correlated and the analysis above is not accurate. For example in ~\cite{engle1982autoregressive, li2000airborne, noam2006asymptotic, gobet1981spectral}. In this section we examine the least squares problem under the assumption that the noise is a sub-Gaussian martingale difference sequence; i.e., we change assumptions A4 and A3 to the following assumptions:
\begin{enumerate}[label=\bfseries A\arabic*:]
\setcounter{enumi}{5}
\item $E\left(v_n | F_{n-1}\right) = 0$. Where $F_{n-1}$ is a filtration, $v_n$ are independent of $\mbf{A}.$
\item $P\left(\abs{v_n} \leq b\right) = 1$.
\item The martingale difference sequence is $R$ sub-Gaussian. i.e. $E\left(e^{s v_n} | F_{n-1}\right) \leq e^{\frac{s^2 R^2}{2}}$.
\end{enumerate}
We now state the theorems for the martingale difference noise sequence version

\begin{thm}{\emph{\textbf{(sub-Gaussian martingale differences)}}}\label{thm:subgaussian_martingale_difference} \ \\
Let $\mbf{x}$ be defined as in ~\BracketRef{eq:x_definition} and assume assumptions A1,A2 and assumptions A6,A8. Let $\eps > 0$ and $r > 0$ be given and $\hat{\mbf{\theta}}_0^N$ and $\mbf{\theta}_0$ be defined as previously, then $\forall N > N\left(r,\eps\right)$
\begin{equation}
P\left(\norm{\hat{\mbf{\theta}}_0^N - \mbf{\theta}_0}_{\infty} > r\right) < \eps
\end{equation}
where
\begin{equation}
N\left(r,\eps\right) = \max \left\{N_1\left(r,\eps\right), N_{rand}\left(\eps\right)\right\},
\end{equation}

\begin{equation}
N_1\left(r,\eps\right) = \frac{8\alpha^2R^2}{r^2\sigma_{min}^2}\log \frac{2p}{\eps}
\end{equation}
and
\begin{equation}
N_{rand}\left(\eps\right) = \frac{4}{3}\frac{\left(6\sigma_{max} + \sigma_{min}\right)\left(p\alpha^2 + \sigma_{max}\right)}{\sigma_{min}^2}\log \frac{2p}{\eps}.
\end{equation}

\end{thm}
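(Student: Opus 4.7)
The plan is to follow the three-stage template of Theorem~\ref{thm:main_theorem}, with the critical modification that the independence-based decomposition of $\left(\meansum a_{ni} v_n\right)^2$ into diagonal ($E_2$) and off-diagonal ($E_3$) contributions is no longer valid when the $v_n$ form only a martingale difference sequence. In its place I would apply a single sub-Gaussian martingale concentration inequality directly to the linear sum $\meansum a_{ni} v_n$. First, I would invoke Lemma~\ref{lemma:general_least_squares_calculation} to reduce the claim to bounding $P\left(\abs{\meansum a_{ni} v_n} > r/\tilde{\lambda}(\mbf{A})\right)$ for each coordinate $i$. Then, applying Lemma~\ref{lemma:omega_bound} with parameter $\eps' = \eps/2$, for $N > N_{rand}(\eps)$ the event $E_{rand} = \{\tilde{\lambda}(\mbf{A}) > 2/\sigma_{min}\}$ has probability at most $\eps/2$, and on its complement the threshold sharpens to $r\sigma_{min}/2$.

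The key step is the MDS concentration bound on $\meansum a_{ni} v_n$. Since $\mbf{A}$ is independent of $\mbf{v}$ by A6, I would condition on $\mbf{A}$ and regard $X_n := a_{ni} v_n$ as a martingale difference sequence adapted to $F_n$, with $a_{ni}$ being $F_{n-1}$-measurable. Combining A1 ($\abs{a_{ni}} \leq \alpha$) with A8 yields the one-step conditional MGF bound
\[
E\!\left(\exp\!\left(s\, a_{ni} v_n\right) \mid F_{n-1}\right) \leq \exp\!\left(\tfrac{s^2 a_{ni}^2 R^2}{2}\right) \leq \exp\!\left(\tfrac{s^2 \alpha^2 R^2}{2}\right),
\]
which I would iterate by the tower property to obtain $E \exp\!\left(s \dispsum_{n=1}^N a_{ni} v_n\right) \leq \exp\!\left(\tfrac{N s^2 \alpha^2 R^2}{2}\right)$. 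Hence $\dispsum_{n=1}^N a_{ni} v_n$ is sub-Gaussian with parameter $\alpha R \sqrt{N}$, and a two-sided Chernoff optimisation over $s$ gives the Azuma--Hoeffding-type tail
\[
P\!\left(\abs{\dispsum_{n=1}^N a_{ni} v_n} > t\right) \leq 2 \exp\!\left(-\tfrac{t^2}{2 N \alpha^2 R^2}\right).
\]

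To finish, I would set $t = N r \sigma_{min}/2$ and force this tail below $\eps/(2p)$; inverting yields the stated $N_1(r,\eps)$ up to the constant inside the logarithm. A union bound across the $p$ coordinates then keeps the total concentration failure below $\eps/2$, and combining with the $\eps/2$ mass of $E_{rand}$ produces the claimed $\eps$ bound. The main obstacle is precisely the loss of temporal independence: the clean $E_2/E_3$ split of Theorem~\ref{thm:main_theorem} hinges on factorising $E(\exp(s v_n v_l))$ across $n \neq l$, which is no longer available. The workaround is to avoid squaring altogether; conditioning on $\mbf{A}$ (exploiting A6) and iterating the one-step conditional sub-Gaussian MGF bound (A8) via the tower property handles the linear sum in one shot, with no cross-term bookkeeping. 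Note that A7 (boundedness of $v_n$) is not used here — sub-Gaussianity of the MDS alone suffices, and A7 only enters in the bounded-MDS refinement.
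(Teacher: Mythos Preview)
Your proposal is correct and mirrors the paper's own proof almost exactly: both invoke Lemma~\ref{lemma:omega_bound} with $\eps' = \eps/2$ to control $E_{rand}$, iterate the conditional sub-Gaussian MGF bound from A8 via the tower property to obtain $E\exp\bigl(s\sum_n a_{ni}v_n\bigr) \leq \exp\bigl(\tfrac{Ns^2\alpha^2R^2}{2}\bigr)$, optimise the Chernoff bound at $s = r\sigma_{min}/(2\alpha^2R^2)$, and finish with a union bound over the $p$ coordinates. Your explicit conditioning on $\mbf{A}$ and use of $a_{ni}^2 \leq \alpha^2$ inside the conditional MGF is in fact slightly cleaner than the paper's monotonicity step $E\exp\bigl(s\sum a_{ni}v_n\bigr) \leq E\exp\bigl(s\alpha\sum v_n\bigr)$, but the overall architecture is identical.
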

\begin{proof}
The first part of the proof remains the same as in the bounded martingale differences case; i.e., for every
$N > N_{rand}\left(r,\eps\right)$ and given the definition of $E_{rand}$ in \BracketRef{eq:e_rand}
\begin{equation}\label{subgaussian_martingale_matrix_bound}
P\left(\mbf{x} \in E_{rand}\right) = P\left(\tilde{\lambda}\left(\mbf{A}\right) \geq \frac{2}{\sigma_{min}}\right) \leq \frac{\eps}{2}.
\end{equation}
We now want to find $N_1\left(r,\eps\right)$ such that $\forall N > N_1\left(r,\eps\right)$
\begin{multline}
P\left(\mbf{x} \in E_2\left(i\right) \wedge \mbf{x} \in E_{rand}\right) = \\
P\left(\frac{1}{N}\dispsum_{n=1}^N a_{ni}v_n > \frac{r\sigma_{min}}{2}\right) \leq \frac{\eps}{2p}
\end{multline}
where $E_2\left(i\right)$ is defined in \BracketRef{def:psi_2}.
We cannot use Azuma's inequality since unlike to the previous theorem the martingale difference sequence is no longer bounded. However, we can prove a concentration result for sub-Gaussian martingale difference sequence using similar methods to ~\cite{thoppe2015concentration}. We start by bounding $E\left(\exp\left(s\dispsum_{n=1}^N a_{ni}v_n\right)\right)$ and then we use Markov's inequality.
\begin{equation}
\begin{array}{lcl}
&E\left(\exp\left(s\dispsum_{n=1}^N a_{ni}v_n\right)\right)& \\
\leq & E\left(\exp\left(s\dispsum_{n=1}^N \alpha v_n\right)\right) & \\
= & E\left(\exp\left(s \alpha\dispsum_{n=1}^{N-1}v_n\right)\right) E\left(\exp\left(s\alpha v_N\right) | F_{N-1}\right) & \\
\leq &E\left(\exp\left(s\alpha \dispsum_{n=1}^{N-1}v_n\right)\right) \exp\left(\frac{s^2\alpha^2R^2}{2}\right).&
\end{array}
\end{equation}
The first inequality follows from assumption A1 and the fact that the exponent is a monotonically increasing function. The second inequality follows from assumption A8. Iterating this procedure yields
\begin{equation}\label{sub_Gaussian_expectation_bound}
E\left(\exp\left(s\dispsum_{n=1}^N a_{ni}v_n\right)\right) \leq \exp\left(\frac{Ns^2\alpha^2R^2}{2}\right).
\end{equation}
Looking now at the original equation we have
\begin{equation}
\begin{array}{lcl}
&P\left(\mbf{x} \in E_2\left(i\right) \wedge \mbf{x} \notin E_{rand}\right)&\\
=&P\left(\dispsum_{n=1}^N a_{ni}v_n > \frac{Nr\sigma_{min}}{2}\right)&\\
= &P\left(\exp\left(s\dispsum_{n=1}^N a_{ni}v_n \right)> \exp \left(s\frac{Nr\sigma_{min}}{2}\right)\right)& \\
\leq & E\left(\exp\left(s\dispsum_{n=1}^N a_{ni}v_n \right)\right) \exp \left(-s\frac{Nr\sigma_{min}}{2}\right) & \\
\leq &\exp\left(\frac{Ns^2\alpha^2R^2}{2}\right)\exp \left(-s\frac{Nr\sigma_{min}}{2}\right)& \\
 = & \exp\left(\frac{N}{2}\left(s^2\alpha^2R^2 - sr\sigma_{min}\right)\right)
\end{array}
\end{equation}
The first inequality follows from Markov's inequality. The second inequality follows from equation ~\BracketRef{sub_Gaussian_expectation_bound}. The inequality is true $\forall s > 0$. Optimizing over $s$ yields $s = \frac{r\sigma_{min}}{2\alpha^2R^2}$. Assigning to the last equation we obtain
\begin{equation}
P\left(\dispsum_{n=1}^N a_{ni}v_n > \frac{Nr\sigma_{min}}{2}\right) < \exp\left(-\frac{Nr^2\sigma_{min}^2}{8\alpha^2R^2}\right).
\end{equation}
Choosing $N > \frac{8\alpha^2R^2}{r^2\sigma_{min}^2}\log \frac{2p}{\eps}$ ensures that
\begin{multline}
P\left(\mbf{x} \in E_2\left(i\right) \wedge \mbf{x} \notin E_{rand}\right) = \\
P\left(\meansum a_{ni}v_n > \frac{r\sigma_{min}}{2}\right) < \frac{\eps}{2p}.
\end{multline}
Using the same union bound strategy as in the previous theorem finishes the proof.
\end{proof}
\begin{thm}{\emph{\textbf{(bounded martingale differences)}}}\label{thm:martingale_difference} \ \\
Let $\mbf{x}$ be defined as in ~\BracketRef{eq:x_definition} and assume assumptions A1,A2 and assumptions A6,A7. Let $\eps > 0$ and $r > 0$ be given and $\hat{\mbf{\theta}}_0^N$ and $\mbf{\theta}_0$ be defined as previously, then $\forall N > N\left(r,\eps\right)$
\begin{equation}
P\left(\norm{\hat{\mbf{\theta}}_0^N - \mbf{\theta}_0}_{\infty} > r\right) < \eps
\end{equation}
where
\begin{multline}
N\left(r,\eps\right) = \max\left\{\frac{8\alpha^2b^2}{r^2\sigma_{min}^2}\log\frac{2p}{\eps},\right. \\\left. \frac{4}{3}\frac{\left(6\sigma_{max} + \sigma_{min}\right)\left(p\alpha^2 + \sigma_{max}\right)}{\sigma_{min}^2}\log \frac{2p}{\eps}\right\}.
\end{multline}
\end{thm}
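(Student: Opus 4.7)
The plan is to follow the same two-stage architecture as the proof of Theorem~\ref{thm:subgaussian_martingale_difference}, replacing the sub-Gaussian Chernoff computation with a direct application of the Azuma--Hoeffding inequality, which is available because A7 now forces every $|v_n| \le b$ almost surely. As before, I would start from Lemma~\ref{lemma:general_least_squares_calculation} to reduce
$P\bigl(\abs{(\hat{\mbf{\theta}}_0^N-\mbf{\theta}_0)_i}>r\bigr)$ to
$P\bigl(\abs{\meansum a_{ni}v_n}>r/\tilde{\lambda}(\mbf{A})\bigr)$, then split the total failure probability budget $\eps$ into two halves: $\eps/2$ for the bad-design event $E_{rand}=\{\tilde{\lambda}(\mbf{A})\ge 2/\sigma_{\min}\}$, and $\eps/2$ for the noise-term event, distributed as $\eps/(2p)$ across the $p$ coordinates via a union bound.

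For the design-matrix piece, Lemma~\ref{lemma:omega_bound} applied with $\eps'=\eps/2$ immediately gives $P(\mbf{x}\in E_{rand})\le\eps/2$ as soon as $N\ge N_{rand}(\eps)$, which yields the first of the two terms in the stated $N(r,\eps)$. Having conditioned on $E_{rand}^C$, we have $\tilde{\lambda}(\mbf{A})\le 2/\sigma_{\min}$, so it suffices to control
\begin{equation*}
P\!\left(\abs{\meansum a_{ni}v_n}>\tfrac{r\sigma_{\min}}{2}\right)
\end{equation*}
for each $i$ and show it is at most $\eps/(2p)$ under the stated lower bound on $N$.

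The core new step is the concentration bound. Choose the filtration $F_n=\sigma(\mbf{A},v_1,\dots,v_n)$; by assumption A6 combined with the independence of $\mbf{v}$ from $\mbf{A}$, the entry $a_{ni}$ is $F_{n-1}$-measurable and $E(a_{ni}v_n\mid F_{n-1}) = a_{ni}E(v_n\mid F_{n-1})=0$, so $\{a_{ni}v_n\}_{n=1}^{N}$ is a martingale difference sequence. Assumption A1 gives $|a_{ni}|\le\alpha$, and A7 gives $|v_n|\le b$, so each term is bounded by $\alpha b$ in absolute value. Applying Azuma--Hoeffding yields
\begin{equation*}
P\!\left(\abs{\dispsum_{n=1}^{N}a_{ni}v_n}>t\right)\le 2\exp\!\left(-\frac{t^{2}}{2N\alpha^{2}b^{2}}\right).
\end{equation*}
Setting $t=Nr\sigma_{\min}/2$ and forcing the right-hand side below $\eps/(2p)$ gives, up to constants, $N\gtrsim \frac{8\alpha^{2}b^{2}}{r^{2}\sigma_{\min}^{2}}\log(2p/\eps)$, which is exactly the $N_1(r,\eps)$ stated in the theorem.

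The final step is the union bound across the $p$ coordinates and across the two failure events $E_{rand}$ and $\bigcup_i (E_2(i)\cap E_{rand}^{C})$, exactly as in Theorem~\ref{thm:subgaussian_martingale_difference}, yielding a total failure probability of at most $\eps/2+\eps/2=\eps$ whenever $N>\max\{N_{rand}(\eps),N_1(r,\eps)\}$. I do not expect any real obstacle: the only subtlety is justifying that $\{a_{ni}v_n\}$ is a bounded martingale difference with respect to a filtration that makes $a_{ni}$ adapted, which is handled by enlarging $F_0$ to contain $\sigma(\mbf{A})$ using the independence of $\mbf{v}$ and $\mbf{A}$; after that, everything is a constant-tracking exercise that mirrors the sub-Gaussian case verbatim except that the $R$ of assumption A8 is replaced by the almost-sure bound $b$ of A7.
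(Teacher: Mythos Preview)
Your proposal is correct and takes essentially the same approach as the paper, which simply remarks that the proof ``is similar to the bounded noise case with the change for martingale difference noise''---i.e., replace Hoeffding's inequality by Azuma--Hoeffding and use the $\eps/2$ split of Theorem~\ref{thm:subgaussian_martingale_difference}. The only point to tidy is that the two-sided Azuma bound carries a leading factor $2$, so forcing it below $\eps/(2p)$ literally gives $\log(4p/\eps)$ rather than $\log(2p/\eps)$; the paper's own proofs are equally loose on this constant, so this is cosmetic rather than a gap.
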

\begin{proof}
This proof is similar to the bounded noise case with the change for martingale difference noise.
\end{proof}
\begin{remark}
While the theorems in this paper deal with the case that the design matrix $\mbf{A}$ is random, they can easily be applied to the case that $\mbf{A}$ is known. In this case the eigenvalues of $\frac{1}{N}\mbf{A}^T\mbf{A}$ can be calculated and therefore, there is no need of using lemma ~\ref{lemma:omega_bound}. If we denote $\sigma_{min} \doteq \lambda_{min}\left(\frac{1}{N}\mbf{A}^T\mbf{A}\right)$ then, we need to remove $N_{rand}$ from the calculation of $N\left(r,\eps\right)$, also $\alpha = \displaystyle \max_{ij}a_{ij}$ is easily calculated. The rest of the proof remains the same. We show this with the example of sub-Gaussian martingale difference sequence model with fixed design matrix. As we said the same technique can be applied to all theorems.
\end{remark}
\begin{thm}{\emph{\textbf{(sub-Gaussian martingale differences with fixed design matrix)}}}\label{thm:subgaussian_martingale_difference_fixed_matrix} \ \\
Let $\mbf{x}$ be defined as in ~\BracketRef{eq:x_definition} and assume assumptions A6,A8. Moreover, assume that $\mbf{A}$ is known, $\sigma_{min} = \lambda_{min}\left(\frac{1}{N}\mbf{A}^T\mbf{A}\right)$ and $\alpha = \displaystyle\max_{i,j}a_{ij}$. Let $\eps > 0$ and $r > 0$ be given and $\hat{\mbf{\theta}}_0^N$ and $\mbf{\theta}_0$ be defined as previously, then $\forall N > N\left(r,\eps\right)$
\begin{equation}
P\left(\norm{\hat{\mbf{\theta}}_0^N - \mbf{\theta}_0}_{\infty} > r\right) < \eps
\end{equation}
where
\begin{equation}
N\left(r,\eps\right) = \frac{8\alpha^2R^2}{r^2\sigma_{min}^2}\log \frac{2p}{\eps}
\end{equation}
\end{thm}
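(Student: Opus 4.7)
The plan is to follow the proof of Theorem~\ref{thm:subgaussian_martingale_difference} almost verbatim, but exploit the fact that the design matrix $\mbf{A}$ is deterministic to drop the random-matrix step entirely. Since $\sigma_{min}$ is now defined as the literal minimum eigenvalue of $\frac{1}{N}\mbf{A}^T\mbf{A}$, we have $\tilde{\lambda}(\mbf{A}) = 1/\sigma_{min}$ deterministically, so the event $E_{rand}$ has probability zero, the bound from Lemma~\ref{lemma:omega_bound} is unnecessary, and the term $N_{rand}(\eps)$ simply drops out of $N(r,\eps)$.

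Applying Lemma~\ref{lemma:general_least_squares_calculation} coordinate-wise, for each $1 \leq i \leq p$ we are reduced to bounding
\begin{equation*}
P\left(\left|\meansum a_{ni} v_n\right| > r\sigma_{min}\right).
\end{equation*}
The next step is the exponential (Chernoff) bound on the linear combination $\sum_n a_{ni} v_n$. Because $\{v_n\}$ is only a martingale difference sequence under A6, I cannot factor the moment generating function as a product of one-dimensional MGFs; instead I will peel off one term at a time. Using the tower property and conditioning on $F_{N-1}, F_{N-2}, \dots$ in turn, A8 gives at each stage $E[\exp(s a_{ni} v_n)\mid F_{n-1}] \leq \exp(s^2 a_{ni}^2 R^2/2)$, and A1 allows me to replace $a_{ni}^2$ by $\alpha^2$, yielding
\begin{equation*}
E\!\left[\exp\!\left(s\dispsum_{n=1}^N a_{ni} v_n\right)\right] \leq \exp\!\left(\tfrac{N s^2 \alpha^2 R^2}{2}\right).
\end{equation*}

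Markov's inequality then gives $P(\sum_n a_{ni} v_n > Nr\sigma_{min}) \leq \exp(-Nr\sigma_{min}\, s + Ns^2\alpha^2 R^2/2)$ for every $s>0$; optimizing in $s$ and handling the lower tail symmetrically produces a Gaussian-type bound of the form $2\exp(-cNr^2\sigma_{min}^2/(\alpha^2 R^2))$ with an explicit constant $c$. Setting the per-coordinate tail to be at most $\eps/p$ and applying the union bound over the $p$ coordinates to pass from $|(\hat{\mbf{\theta}}_0^N - \mbf{\theta}_0)_i|$ to $\|\hat{\mbf{\theta}}_0^N - \mbf{\theta}_0\|_\infty$ yields the stated threshold $N(r,\eps) = \frac{8\alpha^2 R^2}{r^2 \sigma_{min}^2}\log\frac{2p}{\eps}$ (the constant being inherited, as in Theorem~\ref{thm:subgaussian_martingale_difference}, from the particular Chernoff optimization).

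The only conceptually delicate step is the tower argument for the MGF, which replaces the product-of-MGFs step available in the i.i.d.\ case; once that is in place the remainder is a routine Chernoff optimization plus union bound, with the proof strictly shorter than that of Theorem~\ref{thm:subgaussian_martingale_difference} because the random-matrix event and the associated lemma are never invoked.
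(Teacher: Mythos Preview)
Your proposal is correct and matches the paper's approach exactly: the paper gives no separate proof but simply remarks that one should rerun the argument of Theorem~\ref{thm:subgaussian_martingale_difference}, drop Lemma~\ref{lemma:omega_bound} and the $N_{rand}$ term (since the eigenvalues of $\tfrac{1}{N}\mbf{A}^T\mbf{A}$ are now deterministic), and keep the tower-property Chernoff step unchanged. One small inconsistency in your write-up: because you (correctly) use $\tilde\lambda(\mbf{A})=1/\sigma_{min}$ rather than the cruder bound $2/\sigma_{min}$ from the random-design proof, your own optimization actually yields the threshold $\tfrac{2\alpha^2R^2}{r^2\sigma_{min}^2}\log\tfrac{2p}{\eps}$, not $8$; the constant $8$ in the stated theorem is there only because the paper literally copies $N_1$ from Theorem~\ref{thm:subgaussian_martingale_difference} without re-optimizing, so your argument proves a valid (in fact sharper) bound but does not ``inherit'' the $8$ as you claim.
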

\begin{figure}
    \centering
    \includegraphics[width=0.4\paperwidth]{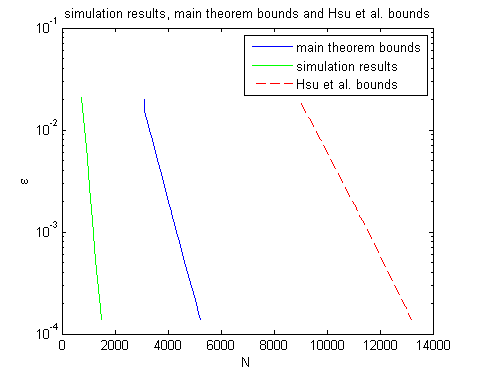}
    \caption{Simulation results, main theorem bounds and Hsu et al. bounds for Gaussian mixture noise with $R = 0.1$, $r = 0.01$, $\sigma_{min} = \sigma_{max} = 1$ and  $p = 8$.}
\end{figure}

\begin{figure}
    \centering
    \includegraphics[width=0.4\paperwidth]{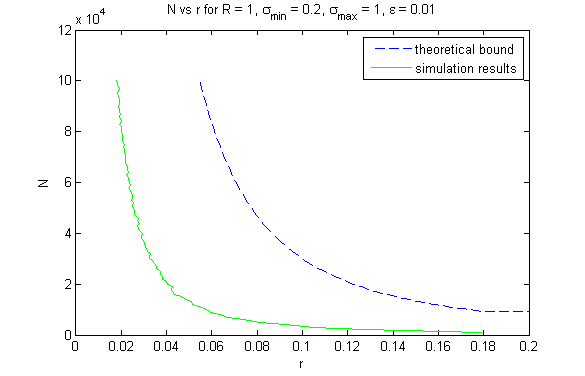}
    \caption{Simulation results and theoretical bounds for uniform noise with $\eps = 0.01, R = 1,  \sigma_{min} = 0.2, \sigma_{max} = 1$ and $p = 2$. The graph is for $N$ as a function of $r$.}
\end{figure}

\begin{figure}
    \centering
    \includegraphics[width=0.4\paperwidth]{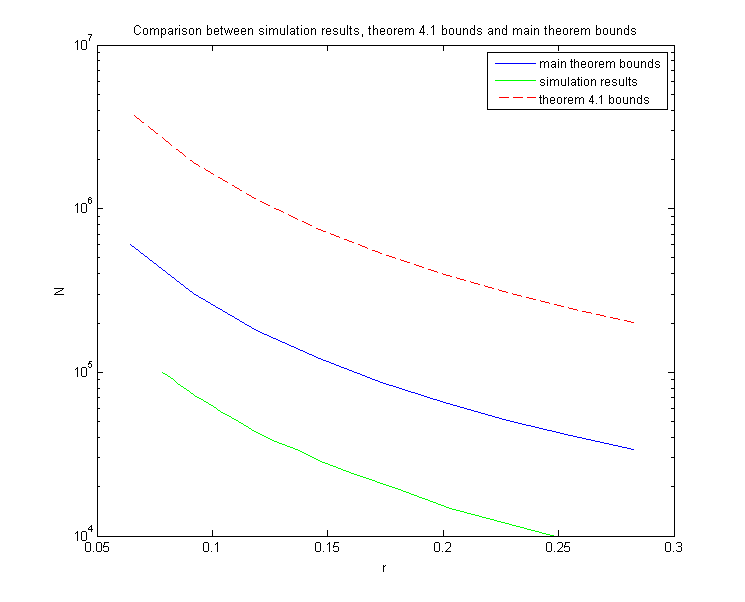}
    \caption{Simulation results, theorem ~\ref{thm:subgaussian_martingale_difference} bounds and theorem ~\ref{thm:main_theorem} with $\eps = 0.05, R = 10, \sigma_{min} = 1, \sigma_{max} = 1$ and $p = 4$. The graph is for $N$ as a function of $r$.}
\end{figure}

\begin{figure}
    \centering
    \includegraphics[width=0.4\paperwidth]{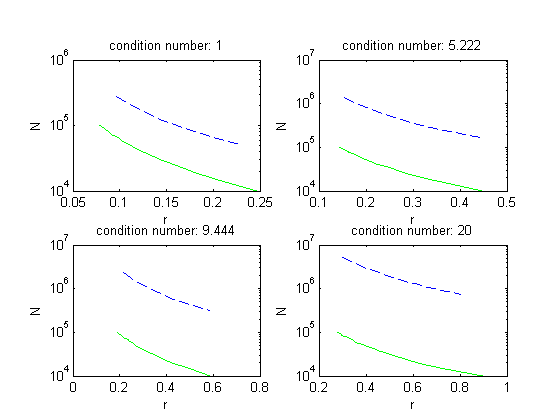}
    \caption{Simulation results and theoretical bounds for multiple condition numbers and $\eps = 0.05, R = 10, \sigma_{max} = 1$ and $p = 4$. The graphs are for $N$ as a function of $r$.}
\end{figure}


\begin{figure}
    \centering
    \includegraphics[width=0.4\paperwidth]{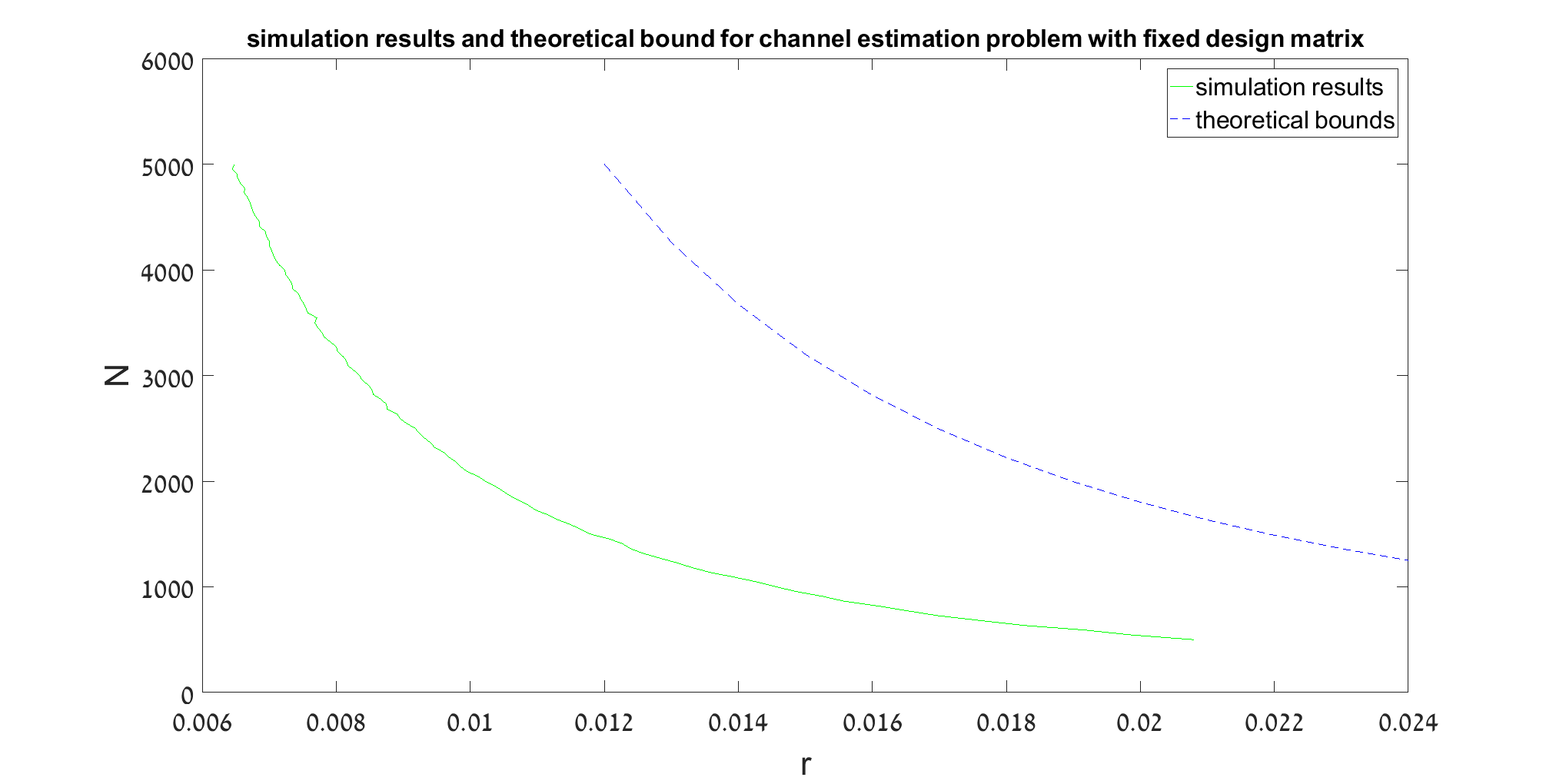}
    \caption{Simulation results and martingale difference theorem bounds for channel estimation problem with fixed design matrix and i.i.d noise with $\eps = 0.01$, $R = 0.1$ and $p = 8$. The graph is for $N$ as a function of $r$.}
\end{figure}

\begin{figure}
    \centering
    \includegraphics[width=0.4\paperwidth]{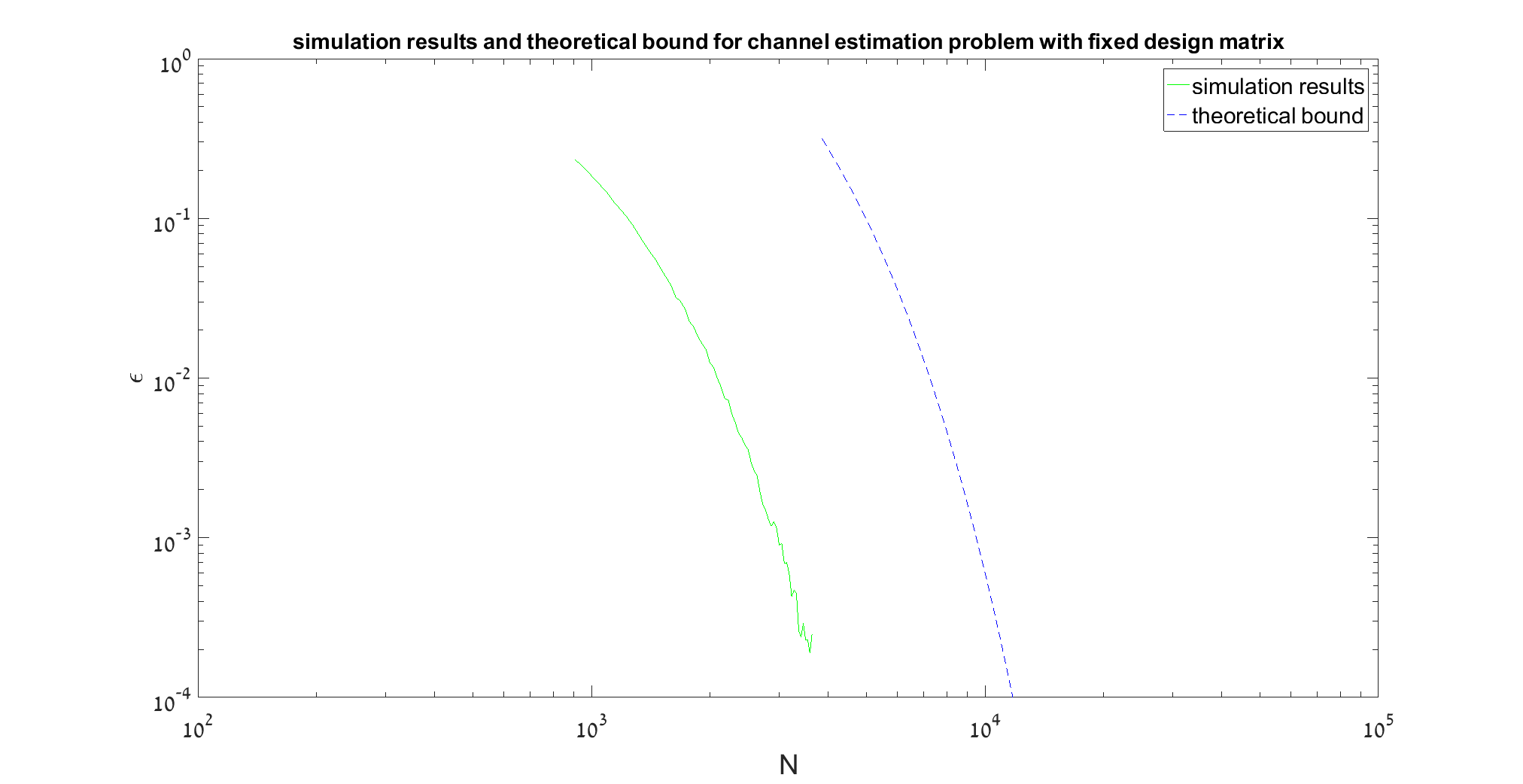}
    \caption{Simulation results and martingale difference theorem bounds for channel estimation problem with fixed design matrix and i.i.d noise with $r = 0.01$, $R = 0.1$ and $p = 8$. The graph is for $\eps$ as a function of $N$.}
\end{figure}

\section{simulation results}\label{sec:simulation_results}
In this section we describe simulation results that demonstrate the bounds. Our model matrix in the first two simulations was a random matrix with bounded elements and variance 1 in each dimension. We tested the bound for two interesting sub-Gaussian noise settings. The first was uniform plus Gaussian noise. The second setting was a Gaussian mixture with two Gaussians, one with high variance and small probability (0.1) and the other with low variance and high probability (0.9). The simulations were conducted using 50,000 samples for each value of $N$. The simulation results shown in Fig 1. indicate that the overall performance of the bound is similar to the performance in the simulations. We also compared our bounds to the bounds generated in ~\cite{hsu2014random}. We clearly see that the bounds in the main theorem in this paper are tighter. Fig. 2. shows the behavior of our bounds as a function of $r$. We set $\eps = 0.01$ and calculated the required number of samples to achieve a deviation $r$. The bounds and the simulation results exhibit similar properties. Fig. 3 depicts the performance of the main theorem bound compared to the martingale difference bound. The figure shows that although the bounds are close to one another, the main theorem bound is tighter. This is due to the finer analysis of the model in the main theorem proof as well as the fact that the model used is better. Fig 4. depicts the bounds and the simulation results for different condition numbers. We see that the bound performs better when the condition number is small. These results suggest that future work can reduce the bound on the number of required samples by a factor of the condition number.
\nl

We now provide a simulation of a problem of high importance in signal processing, i.e., channel estimation in the presence of interfering signal. To that end, we utilize the case where the design matrix is known, and generated by the training signal.
Assume that a training sequence of length $N$, $s_0,...s_{N-1}$ is transmitted through an unknown channel. The noise plus interference is modelled as an $R$ sub-Gaussian martingale difference composed of another transmitter in the area as well as receiver noise. As we now show the external interference which is a bounded communication signal passing through a linear time invariant channel is indeed a martingale difference sequence. The fixed design matrix is given by
\begin{equation}\label{eq:channel_estimation_fixed_a_definition}
	\mbf{A} =
	\begin{bmatrix}
   		s_{0}       & 0 & 0 & \dots & 0 \\
    		s_{1}       & s_{0} & 0 & \dots & 0 \\
    		\hdotsfor{5} \\
    		s_{p-1}       & s_{p-2} & s_{p-3} & \dots & s_{0} \\
		s_{p}    		& s_{p-1} & s_{p-2} & \dots & s_{1} \\
		s_{p+1}    		& s_{p} & s_{p-1} & \dots & s_{2} \\
		\hdotsfor{5} \\
		s_{N-1}    		& s_{N-2} & s_{N-3} & \dots & s_{N-p} \\
	\end{bmatrix}
\end{equation}
where $s_0, \dots s_{N-1}$ are random BPSK signals chosen in advance. $\mbf{\theta}$ are the channel parameters we wish to estimate using least squares these parameters.
The mathematical setting is given by
\begin{equation}
x_n = \displaystyle \sum_{t=1}^ps_{n-t}\theta_{t}+ v_n
\end{equation}
where we define
\begin{equation}
s_{i} \doteq 0 \hsk \forall i < 0.
\end{equation}
This equation can be written as
\begin{equation}
\mbf{x} = \mbf{A}\mbf{\theta} + \mbf{v}
\end{equation}
where $\mbf{A}$ is defined in \BracketRef{eq:channel_estimation_fixed_a_definition}.
The noise vector \nl $\mbf{v} \doteq \left(v_0, \dots, v_{N-1}\right)^T$ can be modelled as:
\begin{equation}
v_n = \dispsum_{i=0}^k h_ij_{n-i} + w_n
\end{equation}
where $j_{n}$ is i.i.d zero mean bounded signal for example a BPSK signal. This can happen for example when estimating the channel in a CDMA sequence, when the interference is composed of another CDMA signal. \footnote{Similar analysis is relevant for high range resolution (HRR) estimation of target parameters in the presence of temporally correlated jammer. See ~\cite{li2000airborne} for example.} We denote by $\eta$ the bound for $j_{n}$, i.e. $P\left(j_{n} \leq \eta\right) = 1$. We also assume that $h_i$ is an unknown system. We now prove that the noise sequence is a zero mean martingale difference and that it is sub-Gaussian and thus admits assumptions A6 and A8. If so, we can use theorem \ref{thm:subgaussian_martingale_difference_fixed_matrix} to calculate the number of samples required to achieve a certain finite sample performance for this interesting model.
\begin{equation}
\begin{array}{lcl}
&E\left(v_n | F_{n-1}\right) &\\
= &E\left(\dispsum_{i = 0}^k h_ij_{n-i} + w_n  | F_{n-1}\right)& \\
= &E\left(\dispsum_{i=0}^k h_ij_{n-i} | F_{n-1}\right) + E\left(w_n | F_{n-1}\right)&\\
 = & E\left(\dispsum_{i=0}^k h_ij_{n-i} | F_{n-1}\right) & = 0.
\end{array}
\end{equation}
The second equality follows from the independence of the random variables $w_n$, $j_n$ and $h_n$. The next equality follows from the fact that $w_n$ is zero mean. The last equality follows from the fact that $E\left(j_n\right) = 0$. We now prove that $v_n$ is sub-Gaussian. We use the assumption that $j_{n} \leq \eta$ and that $h_n$ and $w_n$ are sub-Gaussian with parameter $R_1$ and $R_2$ respectively. Using these facts with the property that $j_kh_n$ is sub-Gaussian because they are independent and $j_{n} \leq \eta$ and the fact that linear combinations of sub-Gaussian random variables is sub-Gaussian ~\cite{vershynin2010introduction, eldar2012compressed} we can conclude that $v_n$ is sub-Gaussian and satisfies assumption A8. We proved that this example satisfies all the assumptions of theorem \ref{thm:subgaussian_martingale_difference_fixed_matrix} and therefore we can use the theorem to bound the number of samples needed to achieve a predefined performance. \nl
We demonstrate these results by calculating the bounds for specific parameters and running simulations. We conducted two experiments. The first aims to study the number of required samples as a function of $r$ and the second aims to study the number of required samples as a function of $\eps$. The parameters of the simulations are $p = 8$, $R = 0.1$. In the first experiment $\eps = 0.01$ and in the second experiment $r = 0.01$. In the task at hand we wish to calculate the number of samples required to ensure that the estimated channel parameters are close to the real parameters with high probability.\nl
Fig. 5 and Fig. 6 clearly demonstrate that the bounds are valid and close to the number of required samples. This example shows again the ability of this work to analyze performance of important communications problems such as channel estimation.
\section{Concluding remarks}\label{sec:concluding_remarks}
This paper examined the finite sample performance of the $L^{\infty}$ error of the linear least squares estimator. We showed very fast convergence of the number of samples required as a function of the probability of the $L^{\infty}$ error. We analyzed performance in both the fixed design matrix case and in the random design matrix case. We showed that in the random design matrix case the number of samples required to achieve a maximal deviation $r$ with probability $1-\eps$ is $N \sim O\left(\frac{1}{r^2}\log \frac{1}{\eps}\right)$ and we provided simply computable bounds in terms of $r$ and $\eps$. We demonstrated in simulations that the new bound is tighter than previous bounds in the case of i.i.d noise. Moreover, we extended the theorem to the interesting case where the noise is a bounded martingale difference sequence. This is the first finite sample analysis result for least squares under this noise model, which is highly relevant in practical applications.
We also analyzed the fixed design matrix setting. In this setting $\mbf{A}$ is not random but known in advance. We showed how we can apply our theorems to this setting as well.  We also point out that a special and interesting case for the random design matrix that is supported by our setting is when the columns are independent and $E\left(a_i^2\right) = \sigma_i$ and $E\left(a_i a_j\right) = 0$ for $j\neq i$. This will result in $\mbf{M} = \text{diag}\left\{\sigma_1, \dots, \sigma_p\right\}$. Our results are valid in this case as well.\nl
A few research problems related to this paper remain open. The first is the generalization to regularized linear least squares and the second is tightening the bounds. We also believe that the sub-Gaussian parameter can be replaced by bounds on a few moments of the distribution and can relax the bounds. Another interesting research direction is tightening the bounds, through better estimates on the tail of the distribution of eigenvalues of random matrices. These important problems are left for further study.

\section{Appendix A}\label{sec:AppendixA}
In this section we prove lemma \ref{lemma:general_least_squares_calculation}.
\begin{proof}
For the least squares problem we have
\begin{equation}
J^N_{0}\left(\mbf{\theta},\mbf{x}\right) =\left(\mbf{x} - \mbf{A}\mbf{\theta}\right)^T\left(\mbf{x} - \mbf{A}\mbf{\theta}\right).
\end{equation}
The minimum of this function is
\begin{equation}
\hat{\mbf{\theta}}^N_{0} = \left(\frac{1}{N} \left(\dispsum_{n=1}^N \mbf{a}_n\mbf{a}_n^T\right)\right)^{-1} \meansum \mbf{a}_n^Tx_n.
\end{equation}

Denoting $\mbf{C} \doteq \frac{1}{N}\left(\mbf{A^T}\mbf{A}\right) = \frac{1}{N} \left(\dispsum_{n=1}^N\mbf{a}_n\mbf{a}_n^T\right)$ we obtain

\begin{equation}\label{eq:first_elementwise_bound}
\begin{array}{lcl}
\abs{\left(\hat{\mbf{\theta}}^N_{0} - \mbf{\theta}_{0}\right)_i} &= &\left(\mbf{C}^{-1}\meansum \mbf{a}_n^T v_n\right)_i \\
&\leq &\tilde{\lambda}\left(\mbf{A}\right)\meansum a_{ni}v_n
\end{array}
\end{equation}
The last inequality is true as $\mbf{C}$ is Hermitian.
\nl
Therefore,
\begin{equation}
\begin{array}{lcl}
&P\left(\left(\hat{\mbf{\theta}^N_{0}} - \mbf{\theta}_{0}\right)_i > r \right) & \\
\leq &P\left(\abs{\tilde{\lambda}\left(\mbf{A}\right)\meansum a_{ni}v_n} > r\right) & \\
\leq& P\left(\abs{\meansum a_{ni}v_n} > \frac{r}{\tilde{\lambda}\left(\mbf{A}\right)}\right)&.
\end{array}
\end{equation}
\end{proof}

\section{Appendix B}\label{sec:AppendixB}
In this section we prove lemma \ref{lemma:omega_bound}.
\begin{proof}
We want to bound the term
\begin{equation}
\begin{array}{lcl}
& P\left(\lambda_{max}\left(\left(\frac{1}{N}\mbf{A}^T\mbf{A}\right)^{-1}\right) \geq \frac{2}{\sigma_{min}}\right) \\
= & P\left(\frac{1}{\lambda_{min}\left(\frac{1}{N}\mbf{A}^T\mbf{A}\right)} \geq \frac{2}{\sigma_{min}}\right) &\\
= & P\left(\lambda_{min}\left(\frac{1}{N}\mbf{A}^T\mbf{A}\right) \leq \frac{\sigma_{min}}{2}\right).&
\end{array}
\end{equation}
Recall that $\mbf{M} = E\left(\frac{1}{N}\mbf{A}^T\mbf{A}\right)$. We have
\begin{equation}
\begin{array}{lll}
& P\left(\lambda_{min}\frac{1}{N}\mbf{A}^T\mbf{A} \leq \frac{\sigma_{min}}{2}\right) \\
= & P\left(-\lambda_{max}\left(-\frac{1}{N}\mbf{A}^T\mbf{A}\right) \leq \frac{\sigma_{min}}{2}\right) &\\
 = &P\left(\lambda_{max}\left(-\frac{1}{N}\mbf{A}^T\mbf{A}\right) \geq -\frac{\sigma_{min}}{2}\right)  &\\
= & P\left(\lambda_{max}\left(-\frac{1}{N}\mbf{A}^T\mbf{A}\right) + \lambda_{min}\left(\mbf{M}\right) \geq \sigma_{min} - \frac{\sigma_{min}}{2}\right) &\\
=&P\left(\lambda_{max}\left(-\frac{1}{N}\mbf{A}^T\mbf{A} + \lambda_{min}\left(\mbf{M}\right)\right)\geq \frac{\sigma_{min}}{2}\right) &\\
\leq & P\left(\lambda_{max}\left(\mbf{M} - \frac{1}{N}\mbf{A}^T\mbf{A}\right) \geq \frac{\sigma_{min}}{2}\right) & \\
= & P\left(\lambda_{max}\left(\dispsum_{n=1}^N \frac{1}{N}\left(\mbf{M} - \mbf{a}_n\mbf{a}_n^T\right)\right) \geq \frac{\sigma_{min}}{2}\right),&
\end{array}
\end{equation}

where the last inequality follows from Weyl's inequality ~\cite{horn2012matrix}. Since $E\left(\frac{1}{N}\left(\mbf{a}_n\mbf{a}_n^T - \mbf{M}\right)\right) = 0$ and since $\mbf{M}$ is positive definite we have $\frac{\sigma_{min}}{2} \geq 0$ we can employ Bernstein's inequality for matrices ~\cite{Tropp}. In order to use Bernstein's inequality we need to calculate the terms $\lambda_{max}\left(\mbf{B}_n\right)$ and $\norm{E\left(\dispsum_{n=1}^N\mbf{B}_n^2\right)}$ where $\mbf{B}_n \doteq \frac{1}{N}\left(\mbf{M} - \mbf{a}_n\mbf{a}_n^T\right)$. We follow the guidelines in ~\cite{vershynin2010introduction, eldar2012compressed} to do so.

\begin{equation}
\begin{array}{lcl}
\norm{\mbf{B}_n} &\leq& \frac{1}{N}\left(\norm{\mbf{a}_n\mbf{a}_n^T} + \sigma_{max}\right)  \\ &\leq& \frac{1}{N}\left(\norm{\mbf{a}_n}_2^2 + \sigma_{max}\right) \\
&\leq& \frac{1}{N}\left(\alpha^2 p + \sigma_{max}\right) = \frac{\alpha^2 p + \sigma_{max}}{N}.
\end{array}
\end{equation}
We now compute $\norm{E\left(\mbf{B}_n^2\right)}$.
\begin{equation}
\mbf{B}_n^2 = \frac{1}{N^2}\left(\left(\mbf{a}_n\mbf{a}_n^T\right)^2 - 2\mbf{a}_n\mbf{a}_n^T\mbf{M} + \mbf{M}^2\right).
\end{equation}
Since $E\left(\mbf{a}_n\mbf{a}_n^T\right) = \mbf{M}$ we get
\begin{equation}
E\left(\mbf{B}_n^2\right) = \frac{1}{N^2}\left(E\left(\mbf{a}_n\mbf{a}_n^T\right)^2 - \mbf{M}^2\right).
\end{equation}
Calculating the norm of the first term gives
\begin{equation}
\begin{array}{lcl}
\norm{E\left(\mbf{a}_n\mbf{a}_n^T\right)^2} &=& \norm{\mbf{a}_n}_2^2\norm{E\left(\mbf{a}_n\mbf{a}_n^T\right)} \\
&\leq& p\alpha^2 \norm{E\left(\mbf{a}_n\mbf{a}_n^T\right)} = p\alpha^2\sigma_{max}.
 \end{array}
\end{equation}
Therefore,
\begin{equation}
\norm{E\left(\mbf{B}_n^2\right)} \leq \frac{p\alpha^2\sigma_{max} + \sigma_{max}^2}{N^2}.
\end{equation}
Using this we obtain
\begin{equation}
\norm{\dispsum_{n=1}^N E\left(\mbf{B}_n^2\right)} \leq \frac{p\alpha^2 \sigma_{max} + \sigma_{max}^2}{N}.
\end{equation}

Now we can use the matrix Bernstein inequality ~\cite{Tropp} to obtain
\begin{equation}
\begin{array}{lcl}
&P\left(\lambda_{max}\left(\dispsum_{n=1}^N\left(-\mbf{a}_n\mbf{a}_n^T + \mbf{M}\right)\right) \geq  \frac{\sigma_{min}}{2}\right) \\
\leq& p\exp\left(-\frac{\frac{\sigma_{min}^2}{8}}{\frac{p\alpha^2\sigma_{max} + \sigma_{max}^2}{N} + \frac{p\alpha^2 + \sigma_{max}}{3N}\frac{\sigma_{min}}{2}}\right) & \\
\leq& p\exp\left(-\frac{3}{4}\frac{N\sigma_{min}^2}{\left(6\sigma_{max} + \sigma_{min}\right)\left(p\alpha^2 + \sigma_{max}\right)}\right).&
\end{array}
\end{equation}
Choosing
\begin{equation}
N \geq \frac{4}{3}\frac{\left(6\sigma_{max} + \sigma_{min}\right)\left(p\alpha^2 + \sigma_{max}\right)}{\sigma_{min}^2}\log\left(\frac{p}{\eps'}\right) \doteq N_{rand}\left(\eps'\right)
\end{equation}
finishes the proof.
\end{proof}
\section{Appendix C}
\begin{proof}
In this section we prove theorem \ref{thm:bounded_noise_theorem}.
In order to prove the result we examine each coordinate separately once more.
In order to bound the term
\begin{equation}
P\left(\abs{\hat{\mbf{\theta}}^N_0 - \mbf{\theta}_0}_i > r\right)
\end{equation}
we use lemma \ref{lemma:general_least_squares_calculation} and analyze the term
\begin{equation}
P\left(\abs{\meansum a_{ni}v_n} > \frac{r}{\tilde{\lambda}\left(\mbf{A}\right)}\right).
\end{equation}
We start by defining the set of events
\begin{equation}
E_{rand} \doteq \left\{\mbf{x} : \tilde{\lambda}\left(\mbf{A}\right) \geq \frac{2}{\sigma_{min}}\right\}.
\end{equation}
We want to study the number of samples required to achieve that $P\left(\mbf{x} \in E_{rand}\right) \leq \frac{\eps}{3}$.
In order to do so we use lemma \ref{lemma:omega_bound} with parameter $\eps' = \frac{\eps}{3}$ to find that $\forall N > N_{rand}\left(\eps\right)$
\begin{equation}
P\left(\mbf{x} \in E_{rand}\right) = P\left(\tilde{\lambda}\left(\mbf{A}\right) \geq \frac{2}{\sigma_{min}}\right) \leq \frac{\eps}{3}.
\end{equation}
We now assume that $\mbf{x} \notin E_{rand}$.
The next part of the proof is to study the probability that given $1\leq i \leq p$, $\mbf{x}$ belongs to the set
\begin{equation}
E_2\left(i\right) = \left\{\mbf{x} : \abs{\meansum a_{ni}v_n} > \frac{r\sigma_{min}}{2}\right\}.
\end{equation}
Since $P\left(\abs{a_{ni}} \leq \alpha\right) = 1 \hsk \forall n,i$ by assumption A1 and $P\left(\abs{v_n} \leq b\right) = 1 \hsk \forall n$ by assumption A5, we have $P\left(\abs{a_{ni}v_n} \leq \alpha b\right) = 1$. Now we can use Hoeffding's inequality ~\cite{dubhashi2009concentration} to bound the large deviation probability.
\begin{equation}
\begin{array}{lcl}
&P\left(\mbf{x} \in E_2\left(i\right) \wedge \mbf{x} \notin E_{rand}\right)&\\
=&P\left(\abs{\meansum a_{ni}v_n} > \frac{r\sigma_{min}}{2}\right) = \\
=& P\left(\abs{\dispsum_{n=1}^N a_{ni}v_n} > \frac{Nr\sigma_{min}}{2}\right) &\\
\leq& 2\exp\left(-\frac{2N^2r^2\sigma_{min}^2}{4N\alpha^2b^2}\right) = 2\exp\left(-\frac{Nr^2\sigma_{min}^2}{2\alpha^2b^2}\right) &\\
\end{array}
\end{equation}
Choosing $N > \frac{2\alpha^2b^2}{r^2\sigma_{min}^2}\log\left(\frac{3p}{\eps}\right) = N_1\left(r,\eps\right)$ ensures that $\forall 1 \leq i \leq p$
\begin{multline}
P\left(\mbf{x} \in E_2\left(i\right)\wedge \mbf{x} \notin E_{rand}\right) = \\
P\left(\abs{\meansum a_{ni}v_n} > \frac{r\sigma_{min}}{2}\right) \leq \frac{2\eps}{3p}.
\end{multline}
Next, we use the union bound to bound the probability that $\mbf{x}$ is in any of the sets $E_2\left(i\right)$.
Define
\begin{multline}
E_2 \cap E_{rand}^C \doteq \\
\left\{\mbf{x} : \exists 1 \leq i \leq p \text{ such that } \mbf{x} \in E_2\left(i\right) \wedge \mbf{x} \notin E_{rand}\right\}.
\end{multline}
Using the union bound over the elements of the error vector we obtain
\begin{multline}
P\left(\mbf{x} \in E_2 \cap E_{rand}^C\right) \leq \\
\dispsum_{i=1}^p P\left(\mbf{x} \in E_2\left(i\right) \wedge \mbf{x} \notin E_{rand} \right) \leq \frac{2\eps}{3}.
\end{multline}
Using the union bound once more we achieve $\forall N > N\left(r,\eps\right)$
\begin{multline}
P\left(\mbf{x} \in \left(E_2 \cap E_{rand}^C\right) \cup E_{rand}\right) \leq \\
P\left(\mbf{x} \in E_2 \cap E_{rand}^C\right) + P\left( \mbf{x} \in \cup E_{rand}\right) \leq \frac{2\eps}{3} + \frac{\eps}{3} = \eps.
\end{multline}
This ensures that $\forall N > B\left(r,\eps\right)$
\begin{equation}
P\left(\norm{\hat{\mbf{\theta}}^N_0  - \mbf{\theta}_0}_{\infty} > r\right) \leq \eps
\end{equation}
and finishes the proof.
\end{proof}
\bibliographystyle{IEEEtran}
\bibliography{least_squares_performance}

\end{document}